\definecolor{darkblue}{rgb}{.2, 0.2,.8}
\definecolor{carageen}{rgb}{0,0.5,0.3}
\definecolor{darkred}{rgb}{.8, .1,.1}
\newtheorem{lemma}{Lemma}[section]
\newtheorem{theorem}[lemma]{Theorem}
\newtheorem{proposition}[lemma]{Proposition}
\newtheorem{definition}[lemma]{Definition}
\newtheorem{corollary}[lemma]{Corollary}
\newtheorem{example}[lemma]{Example}
\newtheorem{exercise}[lemma]{Exercise}
\newtheorem{remark}[lemma]{Remark}
\newtheorem{fig}[lemma]{Figure}
\newtheorem{tab}[lemma]{Table}
\newcommand{\bth}{\begin{theorem}}
\newcommand{\ethe}{\end{theorem}}
\newcommand{\bre}{\begin{remark}\em }
\newcommand{\ere}{\end{remark}}
\newcommand{\ble}{\begin{lemma}}
\newcommand{\ele}{\end{lemma}}
\newcommand{\bde}{\begin{definition}}
\newcommand{\ede}{\end{definition}}
\newcommand{\bco}{\begin{corollary}}
\newcommand{\eco}{\end{corollary}}
\newcommand{\bpr}{\begin{proposition}}
\newcommand{\epr}{\end{proposition}}
\newcommand{\bexer}{\begin{exercise}}
\newcommand{\eexer}{\end{exercise}}
\newcommand{\bexam}{\begin{example}}
\newcommand{\eexam}{\end{example}}
\newcommand{\bfi}{\begin{fig}}
\newcommand{\efi}{\end{fig}}
\newcommand{\btab}{\begin{tab}}
\newcommand{\etab}{\end{tab}}
\newcommand{\rI}{{\rm I}}
\newcommand{\beao}{\begin{eqnarray*}}
\newcommand{\eeao}{\end{eqnarray*}\noindent}
\newcommand{\beam}{\begin{eqnarray}}
\newcommand{\eeam}{\end{eqnarray}\noindent}
\newcommand{\beqq}{\begin{equation}}
\newcommand{\eeqq}{\end{equation}\noindent}
\newcommand{\bce}{\begin{center}}
\newcommand{\ece}{\end{center}}
\newcommand{\barr}{\begin{array}}
\newcommand{\earr}{\end{array}}
\newcommand{\vague}{\stackrel{\lower0.2ex\hbox{$\scriptscriptstyle
                    \it{v} $}}{\rightarrow}}
\newcommand{\weak}{\stackrel{\lower0.2ex\hbox{$\scriptscriptstyle
                    \it{w} $}}{\rightarrow}}
\newcommand{\what}{\stackrel{\lower0.2ex\hbox{$\scriptscriptstyle
                    \it{\hat{w}} $}}{\rightarrow}}
\newcommand{\bdis}{\begin{displaymath}}
\newcommand{\edis}{\end{displaymath}\noindent}
\newcommand{\N}{\mathbb{N}}
\newcommand{\R}{\mathbb{R}}
\newcommand{\ov}{\overline}
\newcommand{\wt}{\widetilde}
\newcommand{\wh}{\widehat}
\newcommand{\vep}{\varepsilon}
\newcommand{\cals}{{\mathcal S}}
\newcommand{\call}{{\mathcal L}}
\newcommand{\cala}{{\mathcal A}}
\newcommand{\cald}{{\mathcal D}}
\begin{document}
\today
\bibliographystyle{plain}
\title[On convolution closure properties of subexponentiality]
{On convolution closure properties of subexponentiality approaching from densities}
\thanks{Muneya Matsui's research is partly supported by the JSPS Grant-in-Aid for Scientific Research C
(23K11019). 
}
\author[M. Matsui]{Muneya Matsui}
\address{Department of Business Administration, Nanzan University, 18
Yamazato-cho, Showa-ku, Nagoya 466-8673, Japan.}
\email{mmuneya@gmail.com}
\author[T. Watanabe]{Toshiro Watanabe}
\address{Center for Mathematical Sciences, The University of Aizu, 
Ikkimachi Tsuruga, Aizu-Wakamatsu, Fukushima, 965-8580, Japan.
}
\email{markov2000t@yahoo.co.jp}

\begin{abstract}
Non-closedness of subexponentiality by the convolution operation is well-known. 
We go a step further and show that subexponentiality and non-subexponentiality 
are generally changeable by the convolution. 
We also give several conditions, by which (non-) subexponentiality is kept.   
Most results are given with densities, which are easily converted to those for distributions. 
As a by-product, we give counterexamples to several past results, which were used to derive the 
non-closedness of the convolution, and modify the original proof. 
\end{abstract}
\keywords{long-tailedness, subexponentiality, convolution, mixture}
\subjclass[2010]{60G70, 60E05, 62F10}
\maketitle

\section{Introduction}
Let $F,G$ be probability distribution functions on $\R:=(-\infty,\infty)$ and denote by $F\ast G$ the convolution of $F$ and $G$. 
The tail probability of $F$ is denoted by $\ov F(x)=1-F(x)$. 
Let $f,g$ be the corresponding probability density functions on $\R$ and 
denote by $f\ast g$ the convolution of $f$ and $g$. 
Throughout the paper, 
for functions $\alpha,\beta:\R \to \R_+:=[0,\infty)$, $\alpha(x) \sim \beta(x)$ 
means that $\lim_{x\to\infty}\alpha(x)/\beta(x)\to 1$. 

We study the following tail equivalence properties for a density $f$ and a distribution $F$. 
\begin{definition} 
$(\mathrm{i})$ A non-negative measurable function $\alpha (x)$ belongs to the class $\call_\gamma,\,\gamma\ge 0$ 
if 
\[
 \lim_{x\to\infty} \alpha (x-y)/\alpha(x) = e^{\gamma y},\quad y\in \R.
\]
If $\gamma>0$, the convergence is uniform in $y\le y'$ for each $y'>0$. 
In case $\gamma=0$, we write $\call:=\call_0$. \\
$(\mathrm{ii})$ $F$ is $\gamma$-subexponential on $\R$, denoted by $\cals_\gamma$ if $\ov F \in \call_\gamma$ 
and $\ov {F^{\ast 2}}(x) \sim 2 \wh f (\gamma) \ov F(x)$, where 
\[
\wh f (\gamma) = \int_{-\infty}^{\infty}e^{\gamma x}F(dx)<\infty.
\]
In case $\gamma=0$ we write $\cals:=\cals_0$. \\
$(\mathrm{iii})$ The density $f$ of $F$ is $\gamma$-subexponential on $\R$, denoted by $\cals_\gamma$, 
if $f\in \call_\gamma$ and $f^{\ast 2}(x) \sim 2\wh f(\gamma) f(x)$. In case $\gamma=0$ we write $\cals:=\cals_0$.
\end{definition} 
Notice that the definition of $\cals_\gamma$ includes the existence of $\gamma$-moment. 
We abuse notation $\call_\gamma$ and $\cals_\gamma$, which are used for both distributions and densities. 
Since we specify either $f$ or $F$ in each context, there would be no confusion. 

The theoretical studies of distribution classes $\call_\gamma$ and $\cals_\gamma$ have been initiated by 
Chistyakov \cite{Chistyakov:1964}, Chover at al. \cite{Chover:Ney:Wainger:1973a,Chover:Ney:Wainger:1973b}, 
Embrechts \cite{Embrechts:Goldie:1980,Embrechts:Goldie:1982} and a considerable number of 
subsequent works have been conducted, see Cline \cite{Cline:1986,Cline:1987}, Kl\"uppelberg \cite{Kluppelberg:1988,Kluppelberg:1989},
Pakes \cite{Pakes:2004}, Shimura and Watanabe \cite{Shimura:Watanabe:2005}, Watanabe \cite{Watanabe:2008}, Li and Tang \cite{Li:Tang:2010} to 
name just a few. 
Even now the investigation of these classes has been an active area of research, 
and the classes continuously supply attractive research problems, which are also important in applications, 
particularly in the heavy-tailed context (e.g. Embrechts et al. \cite{embrechts:kluppelberg:mikosch:1997}, Goldie and Kl\"uppelberg \cite{Goldie:Kluppelberg:2998} 
and Foss et al. \cite{Foss:Korshunov:Zachary:2013}). 
For recent theoretical contributions, see Xu et al. \cite{Xu:Foss:Wang:2015}, Watanabe \cite{Watanabe:2019} and Leipus and \v{S}iaulys \cite{Leipus:Siaulys:2020}.  

The convolution closure problem of $\gamma$-subexponentiality $(\gamma \ge 0)$
for distributions has been raised by Embrechts and Goldie \cite{Embrechts:Goldie:1980} and negatively solved by 
Leslie \cite{Leslie:1989} for $(\gamma=0)$ and Kl\"uppelberg and Villasenor \cite{Kluppelberg:Villasenor:1991} for $(\gamma>0)$, 
i.e. they constructed $F,G \in \cals_\gamma$ such that $F\ast G \notin \cals_\gamma$.  
In the latter case, they proved non-closedness of 
subexponential densities for $\gamma=0$ (constructed $f,g\in \cals$ such that $f\ast g\notin \cals$) and used 
the subexponential relation between distributions and densities together with 
the equivalent relation of $\gamma$-subexponentiality between $\gamma=0$ and $\gamma>0$ 
(\cite[Theorem 1]{Kluppelberg:Villasenor:1991}).  
More recently Leipus and \v{S}iaulys \cite{Leipus:Siaulys:2020} proved that 
under $F,G \in \call_\gamma$, the inclusion of $F\ast G$, $FG$ and $pF+(1-p)G$ for all (some) $p$ into 
$\cals_\gamma$ are equivalent. They also provided an example: $F,G \notin \call_1$ (so that $F,G \notin \cals_1$) such that $F\ast G \in \cals_1$. 

In this note, focusing on densities, we tackle the convolution closure properties of $\cals_\gamma$. 
Recall that an example: $f,g\in \cals$ such that $f\ast g\notin \cals$ was given in \cite{Kluppelberg:Villasenor:1991}. 
We go a step further and show that 
subexponentiality and non-subexponentiality are generally changeable by the convolution operation, 
i.e. we show that 
\begin{enumerate}
 \item the convolution of non-subexponential densities could yield a subexponential density, \\
$f,g\notin \cals \Rightarrow f\ast g\in \cals$. 
\item the convolution of a subexponential density and a non-subexponential density could yield 
both cases, $f\in \cals,g\notin\cals \Rightarrow f\ast g\in \cals$ and $f\in \cals,g\notin\cals \Rightarrow f\ast g\notin \cals$. 
\end{enumerate}
This means that we could exhaust all possible patterns.
We derive these cases within the class of long-tailed densities $\call$
\footnote{Notice that in the example of Leipus and \v{S}iaulys \cite{Leipus:Siaulys:2020}, $F,G\notin \call_1$}, and under the condition 
that tails of two densities do not dominate each other. 
Moreover, we prove that under some dominating relationship between the two tails, the (non-)subexponentiality of the dominant one is 
kept by the convolution. 
We notice that most of our density results here are easily converted those of distributions 
by using, e.g. l'H\^opital's rule.

In proving the results, we clarify the relations between the closure property of 
the mixture operation and that of convolution operation, which is different 
whether the densities are on $\R$ or $\R_+$. Particularly, for the 
density of $\R$, the almost decreasing property (al.d. for short, see Definition \ref{def:ani}) of $f$ plays a major role.
Indeed, we show that al.d. property is inevitable for the closure property of mixture. 

Furthermore, as a by-product, we give counterexamples for several lemmas in 
\cite{Kluppelberg:Villasenor:1991}, which are used to derive the non-closedness 
of the convolution. We modified the lemmas, so that the main assertion in \cite{Kluppelberg:Villasenor:1991} 
could be restored.

We should remark that our results are particularly inspired by previous interesting works by \cite{Kluppelberg:Villasenor:1991}, 
\cite{Leipus:Siaulys:2020}. In constructing examples, the way of making piece-wise linear densities 
by \cite{Jiang:Wang:Cui:Chen:2019} is very helpful.

In the remainder of this section, we give necessary notation and state the construction of this paper.  
Let $f_+$ be the density of the conditional distribution $F_+$ of $F$ on $\R_+$:
$f_+(x)={\bf 1}_{\R_+}(x) f(x) / \ov F(0-),\,x \in \R$ with $\ov F(0-):=\lim_{x\uparrow 0} \ov F(x)$. 
For convenience we frequently use, e.g. $\cals^c$ or $\cald^c$ which respectively implies $\notin \cals$ or $\notin \cald$. 
For non-negative functions $\alpha(x)$ and $\beta(x)$, a notation $\alpha(x)\asymp \beta(x)$ as $x\to \pm \infty$ implies 
that 
\[
 0<\liminf_{x\to \pm \infty} \big(\alpha(x)/ \beta(x)\big) \le \limsup_{x\to \pm \infty} \big( \alpha(x)/\beta(x) \big) <\infty. 
\]
The following monotonic-type conditions play a major role in the two-sided case. 
\begin{definition}
\label{def:ani}
 We say that a function $\alpha:\R\to \R_+$ is almost decreasing $($al.d.$)$ denoted by 
$\alpha \in \cald$ 
if there exists $x_0>0$ and $K>0$ such that 
\begin{align}
\label{eq:def:ald}
 \alpha(x+y) \le K\alpha(x)\quad \text{for all}\ x>x_0,\,y>0.
\end{align} 
\end{definition}
We also study the effect of the following stronger property on the subexponentiality, cf. \cite[p.23]{Bingham:Goldie:Teugels:1989}. 
\begin{definition}
 We say that a function $\alpha:\R\to \R_+$ is asymptotic to a non-increasing function $($a.n.i.$)$ denoted by 
$\alpha \in \cala$
if  
$\alpha$ is locally bounded and positive on $[x_0,\infty)$ for some $x_0>0$, and 
\begin{align}
\label{eq:def:ani}
 \sup_{t\ge x} \alpha(t)\sim \alpha(x)\quad \text{and}\quad \inf_{x_0\le t \le x} \alpha(t)\sim \alpha(x). 
\end{align}
\end{definition}

The rest of this paper is organized as follows. 
In Section \ref{sec:theory:closure:properites}, 
relations between the closure property of convolution and that of mixture 
are investigated. We also derive conditions that the (non-)subexponentiality is 
kept by the convolution. Moreover, the modification of \cite[Lemma 2]{Kluppelberg:Villasenor:1991} 
is provided. We separately give results for the positive-half case (Section \ref{sec:theory:closure:properites:positive})
and those for the two-sided case (Section \ref{sec:theory:closure:properites:two-sided}), since they are rather different. 
All proofs are summarized in Section \ref{proofs}. In Section \ref{sec:examples}
we present several examples which show the changeability of subexponentiality and non-subexponentiality 
by convolution and mixture operations. Counterexamples to Lemmas $1$ and $2$ in \cite{Kluppelberg:Villasenor:1991} are also given.

\section{The closure properties on subexponential density}
\label{sec:theory:closure:properites}
The closure properties of $\cals$ are theoretically studied. 
We separate the results for densities on $\R_+$ from those for densities on $\R$, which 
are rather different. 
At the end of this section, we show the equivalent relation between $\cals$ and $\cals_\gamma$ with $\gamma>0$ for the density, 
so that the results obtained for $\cals$ are also valid for $\cals_\gamma$. 

\subsection{Subexponential density on $\R_+$}
\label{sec:theory:closure:properites:positive}
We give two lemmas, which are also used to construct examples in Section \ref{sec:examples}. 
The first lemma relates the closure property for the mixture and that for the convolution. 
\begin{lemma}
\label{lem:conv:sum}
$(\mathrm{i})$ Let $f,g \in \call$. Then, $f\ast g\in\cals \Leftrightarrow 
 pf+(1-p)g\in \cals$ for some $p\in(0,1)$.
If $f\ast g \in \cals$, then $f\ast g(x)\sim f(x)+g(x)$. \\
$(\mathrm{ii})$  
Let $f,g \in \cals$. Then, $f\ast g \in \cals \Leftrightarrow 
f\ast g(x) \sim f(x)+g(x)$. 
\end{lemma}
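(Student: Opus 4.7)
The plan is to exploit the algebraic identity
\[ h^{\ast 2}=p^2 f^{\ast 2}+2p(1-p)\,f\ast g+(1-p)^2 g^{\ast 2},\qquad h:=pf+(1-p)g, \]
together with the Fatou-type lower bounds available for every pair $f,g\in\call$. I would first establish, using the uniform long-tail convergence on bounded $y$-intervals and a diagonal-sequence argument, the three pointwise lower bounds
\[ \liminf_{x\to\infty}\frac{f^{\ast 2}(x)}{f(x)}\ge 2,\quad \liminf_{x\to\infty}\frac{g^{\ast 2}(x)}{g(x)}\ge 2,\quad \liminf_{x\to\infty}\frac{f\ast g(x)}{f(x)+g(x)}\ge 1. \]
Plugged into the identity these immediately give $\liminf_{x\to\infty}h^{\ast 2}(x)/h(x)\ge 2$ for every $p\in(0,1)$, while $h\in\call$ is automatic from $f,g\in\call$.

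The core of the proof is the following joint characterization: for $f,g\in\call$, both "$h\in\cals$ for some (equivalently every) $p\in(0,1)$" and "$f\ast g\in\cals$" are equivalent to the triple tightness
\[ f^{\ast 2}(x)=2f(x)+o(f(x)+g(x)),\ g^{\ast 2}(x)=2g(x)+o(f(x)+g(x)),\ f\ast g(x)\sim f(x)+g(x). \]
That "$h\in\cals\Leftrightarrow$ triple tightness" follows from the identity by a one-sided squeeze: writing
\[ h^{\ast 2}-2h=p^2(f^{\ast 2}-2f)+2p(1-p)(f\ast g-f-g)+(1-p)^2(g^{\ast 2}-2g), \]
the three bracketed deviations are each $\ge-o(f+g)$ by the lower bounds, so the hypothesis $h^{\ast 2}-2h=o(h)=o(f+g)$ forces each deviation to be $o(f+g)$ individually; conversely, substituting the tightness back into the identity yields $h^{\ast 2}\sim 2h$ for every $p\in(0,1)$.

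To close the circle I would establish "triple tightness $\Leftrightarrow f\ast g\in\cals$". For the forward implication I use $(f\ast g)^{\ast 2}=f^{\ast 2}\ast g^{\ast 2}$, substitute $f^{\ast 2}=2f+\phi$ and $g^{\ast 2}=2g+\psi$ with $\phi,\psi=o(f+g)$ inside a bounded window $|y|\le A$, and invoke uniform long-tail convergence of $f^{\ast 2}$ (resp.\ $g^{\ast 2}$) there to extract the leading asymptotic $2f(x)+2g(x)\sim 2(f\ast g)(x)$; the complement is absorbed by a standard tail-mass bound as $A\to\infty$. For the reverse, from $(f\ast g)^{\ast 2}\sim 2(f\ast g)$ combined with the Fatou lower bound on $f^{\ast 2}\ast g^{\ast 2}$ and the individual lower bounds on $f^{\ast 2}$, $g^{\ast 2}$, a symmetric squeeze yields the triple tightness; in particular $f\ast g\sim f+g$, which is the ``consequence'' asserted in the lemma. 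Part (ii) is the specialization in which $f,g\in\cals$ is assumed from the start: the first two tightness conditions are then automatic and the equivalence collapses to $f\ast g\in\cals\Leftrightarrow f\ast g\sim f+g$.

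The hardest step, and the one I expect to absorb most of the work, is the transfer from the pointwise componentwise asymptotics to $(f\ast g)^{\ast 2}\sim 2(f\ast g)$. Naive substitution of $f^{\ast 2}\approx 2f$ inside the convolution integral is invalid because $\int(f^{\ast 2}-2f)\,dx=-1\ne 0$, so mass is not preserved; the remedy is to confine substitutions to a bounded window, exploit uniform long-tail convergence there, and dominate the complement by the classical $\cals$-type tail-mass splitting in the spirit of the Chistyakov and Chover--Ney--Wainger closure arguments.
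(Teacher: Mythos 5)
Your architecture is sound and, for most of the argument, it is a genuinely different (and arguably cleaner) organization than the paper's. The paper works throughout with a fixed $\alpha$-insensitive splitting $\int_0^{\alpha(x)}+\int_{\alpha(x)}^{x-\alpha(x)}$ of each convolution in the identity $h^{\ast 2}=p^2f^{\ast 2}+2p(1-p)f\ast g+(1-p)^2g^{\ast 2}$, identifies the boundary parts with $2pf+2(1-p)g$, and shows the middle part $I(x)$ is forced to be $o(f+g)$; your one-sided squeeze of the three deviations $f^{\ast 2}-2f$, $g^{\ast 2}-2g$, $f\ast g-(f+g)$ against the Fatou lower bounds is an equivalent but more transparent way to extract the same ``triple tightness,'' and it isolates the equivalence ``$h\in\cals$ for some $p$ $\Leftrightarrow$ for every $p$'' for free. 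Your reverse implication (from $f\ast g\in\cals$ back to the triple tightness, hence to $f\ast g(x)\sim f(x)+g(x)$) via the identity $(f\ast g)^{\ast 2}=f^{\ast 2}\ast g^{\ast 2}$ and a symmetric squeeze also checks out: lower-bounding the middle cross-integral by $(2-\vep)^2\int_{\alpha(x)}^{x-\alpha(x)}f(x-y)g(y)\,dy$ and upper-bounding $f\ast g$ by $(1+\vep)(f+g)$ plus that same integral pinches the cross-integral to $o(f+g)$, which is exactly what the paper obtains from \eqref{fg:smal:fastg}.

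The one step where your description would not survive being written out is the forward implication ``triple tightness $\Rightarrow (f\ast g)^{\ast 2}\sim 2(f\ast g)$.'' Confining the substitution $f^{\ast 2}=2f+o(f+g)$, $g^{\ast 2}=2g+o(f+g)$ to a \emph{bounded} window $0\le y\le A$ and then letting $A\to\infty$ cannot dispose of the complementary region: for a merely long-tailed (non-subexponential) comparison function the integral $\int_A^{x-A}f^{\ast 2}(x-y)g^{\ast 2}(y)\,dy$ is not $o(f(x)+g(x))$ by any tail-mass consideration, and indeed its smallness is equivalent to the subexponentiality you are trying to prove. The missing ingredient is precisely that the triple tightness already gives (via your own squeeze with $p=\tfrac12$) $2^{-1}(f+g)\in\cals$, so that with a window $\alpha(x)\to\infty$ one has $\int_{\alpha(x)}^{x-\alpha(x)}(f+g)(x-y)(f+g)(y)\,dy=o(f(x)+g(x))$, which dominates the middle region after bounding $f^{\ast 2}\le(2+\vep)(f+g)$ and $g^{\ast 2}\le(2+\vep)(f+g)$ there. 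Alternatively — and this is the paper's shortcut — once you know $f\ast g(x)\sim f(x)+g(x)$ and $2^{-1}(f+g)\in\cals$, the closure of density subexponentiality under asymptotic equivalence within $\call$ (Theorem 4.8 of Foss--Korshunov--Zachary) gives $f\ast g\in\cals$ at once, with no need to touch $f^{\ast 2}\ast g^{\ast 2}$ at all. Either repair is routine, but as stated the ``standard tail-mass bound as $A\to\infty$'' is not a proof.
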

Lemma \ref{lem:conv:sum} $(\mathrm{ii})$ is the modification of \cite[Lemma 2]{Kluppelberg:Villasenor:1991}, where only $f,g\in \call$ 
is assumed, which is not enough (see a counterexample (Ex.4) in the next section).

A motivation for the second lemma is coming from an interesting example given in \cite[Section 3]{Kluppelberg:Villasenor:1991}, 
which shows that $f,g\in \cals$ does not always imply $f\ast g\in \cals$. At there we cannot tell which has a dominant tail between 
$f$ and $g$. Then, a natural question arises: if $f\in \cals$ or $\cals^c$ and $g(x)=o(f(x))$ as $x\to\infty$, then what happens to 
$f\ast g$. The following lemma gives an answer. 

\begin{lemma}
\label{lem:comv:rplus}
 Let $f,g$ be densities on $\R_+$ such that $f\in \call$ and $g(x)=o\big(f(x)\big)$ when $x\to\infty$. Then \\
$(\mathrm{i})$ $h=f\ast g \in \cals \Leftrightarrow f\in\cals$. \\
$(\mathrm{ii})$ Assume that 
$\int_{\alpha(x)}^{x-\alpha(x)}f(x-y)g(y)dy=o\big(f(x)\big)$ 
for a function $\alpha(x)$ such that $\alpha(x)<x/2,\,\alpha(x)\to\infty$ as $x\to\infty$ and $f$ is $\alpha$-insensitive. 
Then $f\in \call \setminus \cals \Leftrightarrow h\in \call\setminus \cals$. 
In both cases $f(x)\sim h(x)$ holds. 
\end{lemma}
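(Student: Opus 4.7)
The plan is to analyze the convolution
\[
h(x)=\int_0^x f(x-y)g(y)\,dy
\]
via the three-part decomposition $h=I_1+I_2+I_3$, where $I_1(x)=\int_0^{\alpha(x)}f(x-y)g(y)\,dy$, $I_2(x)=\int_{\alpha(x)}^{x-\alpha(x)}f(x-y)g(y)\,dy$ and $I_3(x)=\int_{x-\alpha(x)}^{x}f(x-y)g(y)\,dy$. Here $\alpha$ is chosen with $\alpha(x)\to\infty$, $\alpha(x)<x/2$, and $f$ $\alpha$-insensitive; such $\alpha$ exists for any long-tailed $f$ and is supplied explicitly in part (ii). The $\alpha$-insensitivity of $f$ gives $I_1(x)\sim f(x)\int_0^{\alpha(x)}g(y)\,dy\to f(x)$ since $g$ is a density. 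For $I_3$, the substitution $u=x-y$ yields $I_3(x)=\int_0^{\alpha(x)}f(u)g(x-u)\,du$; on this range $x-u\ge\alpha(x)\to\infty$, so $g=o(f)$ together with $\alpha$-insensitivity gives $g(x-u)\le\epsilon f(x-u)\le\epsilon(1+o(1))f(x)$ uniformly in $u$, whence $I_3(x)=o(f(x))$ after letting $\epsilon\to 0$.

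For part (ii) the middle piece $I_2(x)=o(f(x))$ is precisely the added hypothesis, so the decomposition immediately yields $h(x)\sim f(x)$; tail equivalence then gives $h\in\call\Leftrightarrow f\in\call$. To propagate (non-)subexponentiality within $\call$, apply the same three-part scheme to $h^{\ast 2}=f^{\ast 2}\ast g^{\ast 2}$: first show $g^{\ast 2}(x)=o(f^{\ast 2}(x))$ by a parallel three-part estimate (using $g=o(f)$, long-tailedness of $f$, and $\int g=1$), then check that the analogue of the integral condition is satisfied for $(f^{\ast 2},g^{\ast 2})$ with a suitable $\beta$-insensitivity scale for $f^{\ast 2}\in\call$. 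This gives $h^{\ast 2}(x)\sim f^{\ast 2}(x)$, and combined with $h\sim f$, the convergence $h^{\ast 2}(x)/h(x)\to 2$ is equivalent to $f^{\ast 2}(x)/f(x)\to 2$. Intersecting with the $\call$-equivalence yields $f\in\call\setminus\cals\Leftrightarrow h\in\call\setminus\cals$ together with the claimed asymptotic $f\sim h$.

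For part (i) the integral bound on $I_2$ is absent, but is supplied implicitly by the $\cals$-hypothesis. If $f\in\cals$, then $g\le\epsilon f$ on the tail gives $I_2(x)\le\epsilon f^{\ast 2}(x)\sim 2\epsilon f(x)$, which together with $I_1\sim f$ and $I_3=o(f)$ produces $h\sim f$; rerunning the three-part argument for $h^{\ast 2}$ using $g^{\ast 2}=o(f^{\ast 2})$ (which follows cleanly from $f^{\ast 2}\sim 2f$) then delivers $h\in\cals$. The main obstacle is the reverse implication $h\in\cals\Rightarrow f\in\cals$: without a priori subexponentiality of $f$, one loses clean control of $I_2$ in terms of $f$, and there is no density-level inverse of convolution. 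I would approach it by contrapositive: supposing $f\in\call\setminus\cals$, so $\limsup f^{\ast 2}(x)/f(x)>2$, combine the lower bound $h(x)\ge I_1(x)\sim f(x)$ with the estimate $h^{\ast 2}(x)\ge(1-o(1))f^{\ast 2}(x)$ coming from the $I_1$-analogue in the decomposition of $h^{\ast 2}$, and push the strict inequality through to $\limsup h^{\ast 2}(x)/h(x)>2$, contradicting $h\in\cals$. Keeping the constants and bounds consistent without presupposing subexponentiality of $f$ will be the technical crux.
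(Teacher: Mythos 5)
Your decomposition establishes $h(x)\sim f(x)$ correctly in part (ii) and in the direction $f\in\cals\Rightarrow h\in\cals$ of part (i), but the reverse direction of (i) --- which you yourself flag as the crux --- is genuinely incomplete, and the contrapositive route you sketch does not close. From $h\geq I_1\sim f$ you only obtain $\limsup_{x\to\infty} f(x)/h(x)\leq 1$, i.e.\ the lower bound $h\geq(1-o(1))f$; to convert $\limsup f^{\ast2}(x)/f(x)>2$ into $\limsup h^{\ast2}(x)/h(x)>2$ you also need an upper bound $h(x)\leq(1+o(1))f(x)$ along the relevant subsequence, and in part (i) there is no middle-integral hypothesis to supply one, so $h$ could a priori be much larger than $f$ exactly where $f^{\ast2}/f$ is large. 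The paper obtains the missing upper bound by a different device: write $1=h(x)/h(x)$, split $\int_0^x f(x-y)g(y)\,dy/h(x)$ over the three ranges, and express the middle piece as $\int_{\alpha(x)}^{x-\alpha(x)}\frac{g(y)}{f(y)}\frac{f(x-y)}{h(x-y)}\frac{f(y)}{h(y)}\cdot\frac{h(x-y)h(y)}{h(x)}\,dy$; the first three factors are bounded (using $g=o(f)$ and $\limsup f/h\le 1$), while $\int_{\alpha(x)}^{x-\alpha(x)}h(x-y)h(y)/h(x)\,dy\to0$ precisely because $h\in\cals$. This yields $\liminf f(x)/h(x)\geq1$, hence $h\sim f$, after which $f\in\cals$ follows from the closure of $\cals$ under tail equivalence within $\call$ (Theorem 4.8 of \cite{Foss:Korshunov:Zachary:2013}). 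Your sketch contains no substitute for this step.

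A secondary issue: the detour through $h^{\ast2}=f^{\ast2}\ast g^{\ast2}$ is both heavier than necessary and not fully justified. Once $h\sim f$ is in hand with $f,h\in\call$, the equivalence of (non-)subexponentiality is immediate from tail-equivalence closure of $\cals$ (this is how the paper deduces (ii) from the contraposition of (i)); you do not need $h^{\ast2}\sim f^{\ast2}$. Conversely, proving $h^{\ast2}\sim f^{\ast2}$ by ``rerunning the three-part argument'' for the pair $(f^{\ast2},g^{\ast2})$ requires the analogue of the middle-integral condition for that pair, which you only promise to ``check'' and which does not follow from the corresponding hypothesis on $(f,g)$; note also that $h\sim f$ alone does not imply $h^{\ast2}\sim f^{\ast2}$ unless one of the two densities is already known to be subexponential.
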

Notice that similar findings for the distribution were pointed by \cite[Proposition 1]{Embrechts:Goldie:Veraverbeke:1979}, 
\cite[Lemma 2.4]{Pakes:2004} and \cite[Proposition 1.1]{Leipus:Siaulys:2020}. 
Due to Lemma \ref{lem:comv:rplus}, the case $f,g\in \cals$ and $f\ast g\notin \cals$ occurs only if 
$f(x)\neq o(g(x))$ and $g(x)\neq o(f(x))$ hold in $\R_+$, i.e. neither $f$ nor $g$ dominates the other in the tail. 
Indeed, the example in \cite{Kluppelberg:Villasenor:1991} is included in the case, and so is 
the example that $f,g\notin \cals$ but $f\ast g\in \cals$ (Ex.3) given in Section \ref{sec:examples}.

\subsection{Subexponential density on $\R$}
\label{sec:theory:closure:properites:two-sided}
The closure properties of $\cals$ for densities on $\R$
are theoretically studied. 
We need additional assumptions in the two-sided case 
to obtain the same conclusions as those for the positive-half case. 
We apply the al.d. property $\cald$ (Definition \ref{def:ani}), which would be a reasonable assumption. 
Indeed, one could see in the following lemma that 
$\cald$ is inevitable to extend the notion $\cals$ from densities on $\R_+$ to those on $\R$. 
\begin{lemma}
\label{lem:ani:R}
Let $f^+$ and $f^-$ be densities on $\R_+$ and $\R_-:=\R \setminus \R_+$, respectively and 
for $p\in(0,1)$ let $f=pf^++ (1-p)f^-$. If $f^+ \in \cals\cap \cald^c$, then
there exists $f^-$ such that $f \notin \cals$.
\end{lemma}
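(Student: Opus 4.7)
My approach is to reduce the subexponentiality of $f$ to a single asymptotic condition that is automatic when $f^+\in\cald$, and then design $f^-$ to violate that condition by exploiting the spike structure guaranteed by $f^+\notin\cald$.

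\textbf{Step 1: Reduction to a single criterion.} I would first compute $f\ast f$ on $\R_+$. Since $f^-\ast f^-$ vanishes on $\R_+$, expanding $f=pf^++(1-p)f^-$ yields, for $x>0$,
\begin{equation*}
f\ast f(x) = p^2(f^+\ast f^+)(x) + 2p(1-p)I(x),\qquad I(x):=\int_0^\infty f^+(x+z)\,f^-(-z)\,dz.
\end{equation*}
On $\R_+$, $f(x)=pf^+(x)$, so $f\in\call$ is inherited from $f^+\in\call$; combined with $(f^+\ast f^+)(x)\sim 2f^+(x)$ from $f^+\in\cals$, the relation $f\ast f(x)\sim 2f(x)$ collapses (after dividing by $f^+(x)$) to the single criterion $I(x)\sim f^+(x)$. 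Note that \emph{if} $f^+\in\cald$ held, the uniform bound $f^+(x+z)\le Kf^+(x)$ together with the pointwise convergence $f^+(x+z)/f^+(x)\to 1$ from $\call$ and dominated convergence would force $I(x)/f^+(x)\to\int_0^\infty f^-(-z)\,dz=1$ for every density $f^-$. Hence the almost-decreasing property of $f^+$ is precisely what makes the mixture closure work, and its failure is what I will exploit.

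\textbf{Step 2: Constructing $f^-$.} Using $f^+\notin\cald$ I would extract sequences $x_n\to\infty$ and $y_n>0$ and disjoint intervals $J_n\subset\R_+$ of positive widths $\delta_n$ centred at $x_n+y_n$ on which the integral average of $f^+$ exceeds $K_n f^+(x_n)$ with $K_n\to\infty$. Passing to a subsequence with $K_n\ge 2^n$ and arranging that the reflected intervals $[-y_n-\delta_n/2,\,-y_n+\delta_n/2]$ are disjoint and contained in $\R_-$, I would set
\begin{equation*}
f^-(y) := C\bigg(r(y) + \sum_{n\ge 1}\frac{\alpha_n}{\delta_n}\,\1_{[-y_n-\delta_n/2,\,-y_n+\delta_n/2]}(y)\bigg),\qquad \alpha_n := 2^{-n/2},
\end{equation*}
where $r$ is any fixed density on $\R_-$ and $C=(1+\sum_n\alpha_n)^{-1}$ is a normaliser. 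Evaluating $I$ at $x=x_n$ and retaining only the $n$-th bump,
\begin{equation*}
I(x_n) \ge \frac{C\alpha_n}{\delta_n}\int_{J_n} f^+(s)\,ds \ge C\alpha_n K_n f^+(x_n) \ge C\cdot 2^{n/2}\,f^+(x_n),
\end{equation*}
so $I(x_n)/f^+(x_n)\to\infty$, which contradicts $I\sim f^+$ and therefore forces $f\notin\cals$.

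\textbf{Main obstacle.} The delicate step is the extraction of honest integral spikes $J_n$ from the pointwise definition of $\cald$. For densities built by piecewise-linear bumps of positive width (as in Section \ref{sec:examples}, in the spirit of \cite{Jiang:Wang:Cui:Chen:2019}) the spikes are already intervals and $(J_n,\delta_n,K_n)$ can be read off the construction directly; this is the case relevant for the paper's counterexamples. For a general $f^+\notin\cald$, the existence of such intervals follows from a standard averaging argument applied to the pointwise violations of (\ref{eq:def:ald}); if instead the non-al.d.\ behaviour of $f^+$ were concentrated on a Lebesgue-null set, then $f^+$ would be a.e.\ equal to an al.d.\ density and the lemma would hold with any convenient $f^-$, so in all cases the construction above completes the proof.
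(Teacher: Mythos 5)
Your proposal is correct and follows essentially the same route as the paper: there, one extracts spikes $f^+(a_n+u)\ge c_n f^+(a_n)$ holding for $u$ in unit intervals $(b_n,b_n+1)$ and sets $f^-(-u)=c^{-1}c_n^{-1/2}$ on those intervals, so that the cross term $\int f^+(x+z)f^-(-z)\,dz$ is of order at least $\sqrt{c_n}\,f^+(a_n)$ at $x=a_n$ --- exactly your pairing of summable bump masses $\alpha_n$ against spike heights $K_n$ with $\alpha_nK_n\to\infty$, and your reduction to the criterion $I(x)\sim f^+(x)$ is just an explicit restatement of the paper's lower bound on $f\ast f(a_n)/f(a_n)$. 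The one place to tighten is your ``main obstacle'': the null-set dichotomy at the end is not really justified (its fallback branch does not obviously give the conclusion), but it is also unnecessary, since the interval version of the $\cald^c$ spikes follows from the pointwise violations together with the standard fact that for a long-tailed $f^+$ the convergence $f^+(x+t)/f^+(x)\to 1$ is uniform for $t$ in compact sets, applied at the points $a_n+y_n$ where the violation occurs.
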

Take $p=F([0,\infty))$ and $f^+ =f_+$, the conditional density of $f$ on $\R_+$. 
Then, Lemma \ref{lem:ani:R} asserts that even $f_+\in \cals$, without $f\in \cald$ 
we can not rule out the possibility of $f\notin \cals$.

The following is the counterpart of Lemma \ref{lem:conv:sum}. 
\begin{lemma}
\label{lem:conv:sum:R}
 Let $f,g\in \call$. Suppose that $pf+(1-p)g\in \cals$ for some $p\in(0,1)$ and then
 $f\ast g(x)\sim f(x)+g(x)$. If additionally $pf+(1-p)g\in \cald$ for the same $p$ or 
$\limsup_{x\to-\infty} \frac{f\ast g(x)}{f(x)+g(x)}\le C$ for some $C>0$, and then $f\ast g\in \cals$. 

Conversely if $f\ast g\in \cals$ and $[$ $f\ast g\in \cald$ or $\liminf_{x\to-\infty} \frac{f\ast g(x)}{f(x)+g(x)}\ge C $ for some $C>0$
$]$, then $f\ast g(x)\sim f(x)+g(x)$, and moreover $2^{-1}(f+g)\in \cals$. 
\end{lemma}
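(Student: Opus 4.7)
The plan is to adapt the arguments of Lemma \ref{lem:conv:sum} from $\R_+$ to $\R$, with the additional hypotheses at $-\infty$ playing the role of the automatic $\R_+$-confinement in the positive-half setting. Throughout I set $h=pf+(1-p)g$ and exploit the algebraic identity
\[
h^{\ast 2} \;=\; p^2 f^{\ast 2} + 2p(1-p)(f\ast g) + (1-p)^2 g^{\ast 2},
\]
together with the long-tail Fatou bounds $\liminf_{x\to\infty} f^{\ast 2}(x)/f(x)\geq 2$, $\liminf_{x\to\infty} g^{\ast 2}(x)/g(x)\geq 2$, and $\liminf_{x\to\infty} (f\ast g)(x)/(f(x)+g(x))\geq 1$. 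These bounds hold on $\R$ by splitting the defining convolution integrals in the $y$-regions $\{|y|\le A\}$ and $\{|x-y|\le A\}$ for large $A$, and using that $f,g\in\call$ makes $f(x-y)/f(x)\to 1$ and $g(x-y)/g(x)\to 1$ uniformly on compact sets of $y$.

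For the first (unconditional) assertion, the hypothesis $h\in\cals$ gives $h^{\ast 2}(x)\sim 2h(x)=2pf(x)+2(1-p)g(x)$, so the identity rearranges to
\[
p^2\bigl(f^{\ast 2}-2f\bigr) \;+\; 2p(1-p)\bigl(f\ast g - (f+g)\bigr) \;+\; (1-p)^2\bigl(g^{\ast 2}-2g\bigr) \;=\; o(f+g).
\]
Each of the three summands has nonnegative $\liminf$ when divided by $f+g$, by the Fatou bounds above, so each must individually be $o(f+g)$. The middle term is exactly the desired asymptotic $(f\ast g)(x)\sim f(x)+g(x)$, and as byproducts we extract $f^{\ast 2}\sim 2f$ and $g^{\ast 2}\sim 2g$ at $+\infty$.

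To upgrade to $f\ast g\in\cals$ under the extra hypothesis, I would first observe that $f\ast g\in\call$, which is immediate from $f\ast g\sim f+g$ together with $f,g\in\call$. It then remains to verify $(f\ast g)^{\ast 2}(x)\sim 2(f\ast g)(x)$; writing $(f\ast g)^{\ast 2}(x)=\int (f\ast g)(x-y)(f\ast g)(y)\,dy$ and splitting the $y$-range into pieces near $0$, near $x$, and the middle, the first two pieces contribute $\sim 2(f\ast g)(x)$ by long-tailedness of $f\ast g$, while the middle-piece contribution is controlled precisely by the extra hypothesis: $\cald$ caps $(f\ast g)(x-y)$ pointwise in terms of $(f\ast g)(x)$ when $y\ll 0$, and the $\limsup$ ratio bound does the same thing averaged at $-\infty$ after substituting $f\ast g\sim f+g$. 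The converse direction reverses this machinery: given $(f\ast g)^{\ast 2}\sim 2(f\ast g)$ and the negative-tail control, one forces the three Fatou inequalities to be saturated, recovering $f\ast g\sim f+g$, and then reading the identity with $p=1/2$ yields $h^{\ast 2}\sim 2h$ for $h=2^{-1}(f+g)$, so $2^{-1}(f+g)\in\cals$.

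The main obstacle throughout is the negative-axis contribution to the two-sided convolution. The hypothesis $f,g\in\call$ constrains only $+\infty$ behaviour, so irregular mass at $-\infty$ of either density can be transported to $+\infty$ through terms $f(x-y)g(y)$ with $y\ll 0$ and hence $x-y\gg x$, where the long-tail relation gives no leverage. The two alternative forms of the extra hypothesis are calibrated to close exactly this gap: $\cald$ via the pointwise bound $\alpha(x+y)\le K\alpha(x)$, and the $\limsup$ or $\liminf$ ratio bound via an averaged comparison between $(f\ast g)$ and $f+g$ at $-\infty$.
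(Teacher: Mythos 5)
Your proposal is correct and follows essentially the same route as the paper: expand the convolution square of the mixture, use long-tailedness/$\alpha$-insensitivity to handle the neighbourhoods of $0$ and of $x$, let the hypothesis $pf+(1-p)g\in\cals$ annihilate the middle range $[\alpha(x),x-\alpha(x)]$, and invoke the $\cald$ or ratio condition at $-\infty$ to control the contribution from $y<-\alpha(x)$, which is exactly where the two-sided case departs from $\R_+$. Your derivation of $f\ast g(x)\sim f(x)+g(x)$ via the bilinear identity and saturation of the three Fatou inequalities is a clean repackaging of the paper's explicit integral decomposition; the only imprecisions are that the byproducts you actually obtain are $f^{\ast 2}-2f=o(f+g)$ and $g^{\ast 2}-2g=o(f+g)$ rather than $f^{\ast 2}\sim 2f$ and $g^{\ast 2}\sim 2g$ (which is nonetheless exactly the form needed downstream), and that the range $[\alpha(x),x-\alpha(x)]$ is controlled by these derived estimates rather than by the extra hypothesis, which is needed only for $(-\infty,-\alpha(x))$.
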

\begin{remark}
 From Lemma \ref{lem:conv:sum:R}, we have the following fact. \\
Assume that $f,g\in \call$ and $[$ $f\ast g\in \cald$ or $f\ast g(x) \asymp f(x)+g(x)$ as $x\to-\infty$ $]$. Then 
$pf(x)+(1-p)g(x)\in \cals$ for some $p\in (0,1)$ if and only if $f\ast g\in \cals$.  
\end{remark}

The next lemma is a counterpart of Lemma\ref{lem:comv:rplus}. 
\begin{lemma}
\label{lem:comv:two-sided}
Let $f,g$ be densities on $\R$ such that $f\in \call\cap \cald$ and $g(x)=o(f(x))$. Then,\\
$(\mathrm{i})$ $f\in\cals \Leftrightarrow f\ast g \in \cals$. \\ 
$(\mathrm{ii})$ 
Assume that $\int_{\alpha(x)}^{x-\alpha(x)}f(x-y)g(y)dy=o(f(x))$ 
for a function $\alpha(x)$ such that $\alpha(x)<x/2,\,\alpha(x)\to \infty$ as $x\to\infty$ and 
$f$ is $\alpha$-insensitive. 
Then $f\in \call \cap \cals^c \Leftrightarrow f\ast g \in \call\cap \cals^c$. 
\end{lemma}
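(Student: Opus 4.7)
The plan is to reduce both parts of the lemma to the single tail equivalence $f\ast g(x)\sim f(x)$ as $x\to\infty$; once this is established, both directions of $(\mathrm{i})$ and $(\mathrm{ii})$ follow because membership in $\call$, $\cals$, and $\cals^c$ for densities is preserved under tail equivalence.

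To prove the tail equivalence, I would decompose
\[
 f\ast g(x)=\int_{-\infty}^{\alpha(x)}f(x-y)g(y)dy+\int_{\alpha(x)}^{x-\alpha(x)}f(x-y)g(y)dy+\int_{x-\alpha(x)}^{\infty}f(x-y)g(y)dy=:I_1+I_2+I_3,
\]
using the $\alpha$ of the hypothesis for $(\mathrm{ii})$ or, for $(\mathrm{i})$, any $\alpha$ with $\alpha(x)<x/2$, $\alpha(x)\to\infty$, and $f$ $\alpha$-insensitive (existence follows from $f\in\call$). For $I_1$, split further at $y=-A$: the al.d.\ bound $f(x-y)\le Kf(x)$ for $y\le 0$ controls the $(-\infty,-A]$ piece by $Kf(x)\int_{-\infty}^{-A}g(y)dy$, which vanishes as $A\to\infty$; on $[-A,\alpha(x)]$ the uniform convergence $f(x-y)/f(x)\to 1$ (from $\call$ on $[-A,0]$ and $\alpha$-insensitivity on $[0,\alpha(x)]$) delivers a piece asymptotic to $f(x)\int_{-A}^{\alpha(x)}g(y)dy\to f(x)$, so $I_1\sim f(x)$. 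For $I_3$, substitute $z=x-y$: on $z\le\alpha(x)$ we have $x-z\to\infty$, so $g(x-z)\le\epsilon(x)f(x-z)$ with $\epsilon(x)\to 0$, and $f(x-z)\le Kf(x)$ by $\cald$ together with $\alpha$-insensitivity; hence $I_3\le K\epsilon(x)f(x)\int f(z)dz=o(f(x))$.

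The crux is the middle integral $I_2$. For part $(\mathrm{ii})$ this is exactly the standing hypothesis. For the forward direction of $(\mathrm{i})$, $f\in\cals$ gives $f^{\ast 2}(x)\sim 2f(x)$, so choosing $Y$ with $g(y)\le\epsilon f(y)$ on $(Y,\infty)$ (and $x$ large enough that $\alpha(x)>Y$) one obtains $I_2\le\epsilon\int_{\alpha(x)}^{x-\alpha(x)}f(x-y)f(y)dy\le\epsilon f^{\ast 2}(x)\le 2\epsilon f(x)(1+o(1))$, and $\epsilon\to 0$ forces $I_2=o(f(x))$. The reverse direction of $(\mathrm{i})$ is the main obstacle, since only $f\ast g\in\cals$ is available. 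I would run a bootstrap through the identity $(f\ast g)^{\ast 2}=f^{\ast 2}\ast g^{\ast 2}$: first, $f^{\ast 2}\in\call$ is a standard consequence of $f\in\call\cap\cald$, so
\[
 f^{\ast 2}\ast g^{\ast 2}(x)\ge\int_{-A}^{A}f^{\ast 2}(x-y)g^{\ast 2}(y)dy\sim f^{\ast 2}(x)\int_{-A}^{A}g^{\ast 2}(y)dy,
\]
and letting $A\to\infty$ yields $(f\ast g)^{\ast 2}(x)\ge f^{\ast 2}(x)(1-o(1))$. Combined with $(f\ast g)^{\ast 2}(x)\sim 2(f\ast g)(x)$ this produces $f^{\ast 2}(x)\le 2(f\ast g)(x)(1+o(1))$. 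Substituting back into the bound $I_2\le\epsilon f^{\ast 2}(x)$ and into $f\ast g=I_1+I_2+I_3$ gives
\[
 (f\ast g)(x)\bigl(1-2\epsilon(1+o(1))\bigr)\le f(x)(1+o(1)),
\]
so $\epsilon\to 0$ forces $\limsup(f\ast g)(x)/f(x)\le 1$; combined with $\liminf\ge 1$ from $I_1$, we conclude $f\ast g(x)\sim f(x)$ and hence $f\in\cals$ by tail equivalence. The technical difficulty sits in this bootstrap: one must establish $f^{\ast 2}\in\call$ cleanly (which is where the hypothesis $\cald$ earns its keep) and verify that the $\epsilon\to 0$ limit in the linear inequality above can be taken while absorbing all $o(1)$ remainders uniformly in $x$.
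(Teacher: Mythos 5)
Your reduction of both parts to the single tail equivalence $f\ast g(x)\sim f(x)$, and your estimates for $I_1$ and $I_3$ via the al.d.\ bound, match the paper's strategy. The one genuine gap is the final transfer step: you assert that membership in $\cals$ is preserved under tail equivalence of densities, and for densities on $\R$ this is false in general. Subexponentiality of a two-sided density depends on its behaviour on the negative half-line, about which tail equivalence at $+\infty$ says nothing; the paper's Lemma \ref{lem:ani:R} is precisely a construction where a density that is subexponential ``from the right'' fails to be in $\cals$ because of mass placed near $-\infty$. Both transfers you need ($f\in\cals,\ f\ast g\sim f\Rightarrow f\ast g\in\cals$, and $f\ast g\in\cals,\ f\ast g\sim f\Rightarrow f\in\cals$) are valid here, but only because $f\in\cald$: one must check that the contributions of $\int_{-\infty}^{-\alpha(x)}$ and $\int_{\alpha(x)}^{x-\alpha(x)}$ to the convolution square are $o(f(x))$, and the first of these requires $f(x-y)\le Kf(x)$ for $y<0$. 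The paper makes this explicit by invoking Lemma 3.11 of \cite{matsui:2022} ``together with $f\in\cald$''; without some such step your argument is incomplete at exactly the point where the two-sided setting is delicate.

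For the hard direction ($f\ast g\in\cals\Rightarrow f\ast g(x)\sim f(x)$) your bootstrap through $(f\ast g)^{\ast 2}=f^{\ast 2}\ast g^{\ast 2}$ is a genuinely different route from the paper's. The paper instead expands $1=(f\ast g)(x)/(f\ast g)(x)$ into five integrals and kills the middle one by factoring its integrand as $\frac{f(x-y)}{h(x-y)}\,\frac{f(y)}{h(y)}\,\frac{g(y)}{f(y)}\cdot\frac{h(x-y)h(y)}{h(x)}$ with $h=f\ast g$: the first three ratios are bounded using $\limsup f/h\le 1$ and $g=o(f)$, and the last factor integrates to $o(1)$ because $h\in\cals$. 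This avoids $f^{\ast 2}$ altogether, and in particular avoids your auxiliary claim that $f^{\ast 2}\in\call$, which is true under $f\in\call\cap\cald$ but is not ``standard'' for two-sided densities and itself needs the al.d.\ hypothesis (the same negative-half-line issue). Your closing inequality $\bigl(1-2\epsilon(1+o(1))\bigr)f\ast g(x)\le (1+o(1))f(x)$ does close correctly — take $\limsup_x$ first, then $\epsilon\to 0$, so the uniformity worry you raise is not an obstacle — and once the two points above are supplied your argument is complete; the paper's version is shorter because it works with $h$ directly rather than detouring through $f^{\ast 2}$.
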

\begin{remark}
Notice that in view of Lemmas \ref{lem:comv:rplus} and \ref{lem:comv:two-sided}, 
the tail domination assumption is one option to control the property $\cals$ of the mixture and the convolution, 
 which are often equivalent. If the tail domination does not hold, then we show by examples that 
all possible cases occur in the next section.  
\end{remark}
Finally, we see the equivalent relation between $\cals$ and $\cals_\gamma$ with $\gamma>0$.  
\begin{lemma}
\label{lem:exp:tilt}
 Let $f$ be a density on $\R$ and let $\gamma>0$. 
Assume that $f$ has the exponential $-\gamma$ moment, i.e. $\wh f(-\gamma)<\infty$. 
Then for the exponential tilt $f_\gamma$ of $f$, we have 
\[
 f_\gamma(x):= e^{-\gamma x} f(x)/ \wh f(-\gamma) \in \cals_\gamma \,(\text{resp.}\, \call_\gamma) 
\Leftrightarrow f \in \cals\, (\text{resp}.\, \call).  
\]
\end{lemma}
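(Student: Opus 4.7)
The plan is to carry out the equivalences by a direct computation with the exponential tilt, with no need to appeal to any of the earlier lemmas. The underlying observation is that tilting by $e^{-\gamma x}$ multiplies every ratio $f_\gamma(x-y)/f_\gamma(x)$ by $e^{\gamma y}$ and multiplies the convolution $f_\gamma^{\ast 2}(x)$ by the uniform factor $e^{-\gamma x}/\wh f(-\gamma)^2$, so the two defining properties of $\cals_\gamma$ should reduce mechanically to those of $\cals$.

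First I would verify that $f_\gamma$ is a density (immediate from $\wh f(-\gamma)<\infty$) and dispose of the $\call$--$\call_\gamma$ equivalence. Writing
\[
\frac{f_\gamma(x-y)}{f_\gamma(x)}
=\frac{e^{-\gamma(x-y)}f(x-y)/\wh f(-\gamma)}{e^{-\gamma x}f(x)/\wh f(-\gamma)}
=e^{\gamma y}\,\frac{f(x-y)}{f(x)},
\]
the right-hand side tends to $e^{\gamma y}$ as $x\to\infty$ if and only if $f(x-y)/f(x)\to 1$, i.e.\ $f\in\call$. The uniform convergence for $y\le y'$ required in $\call_\gamma$ when $\gamma>0$ follows from the pointwise convergence of $f\in\call$ together with the monotonicity of $y\mapsto e^{\gamma y}$ on bounded intervals (a standard Dini-type argument).

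For the $\cals$--$\cals_\gamma$ equivalence I would first compute the two scalar quantities that appear in the definition:
\[
\wh{f_\gamma}(\gamma)=\int_{-\infty}^{\infty}e^{\gamma x}f_\gamma(x)\,dx
=\frac{1}{\wh f(-\gamma)}\int_{-\infty}^{\infty}f(x)\,dx=\frac{1}{\wh f(-\gamma)},
\]
so in particular $\wh{f_\gamma}(\gamma)<\infty$, and then the convolution
\[
f_\gamma^{\ast 2}(x)=\int_{-\infty}^{\infty}\frac{e^{-\gamma(x-y)}f(x-y)}{\wh f(-\gamma)}\cdot\frac{e^{-\gamma y}f(y)}{\wh f(-\gamma)}\,dy
=\frac{e^{-\gamma x}}{\wh f(-\gamma)^2}\,f^{\ast 2}(x).
\]
Combining these gives the identity
\[
\frac{f_\gamma^{\ast 2}(x)}{2\,\wh{f_\gamma}(\gamma)\,f_\gamma(x)}
=\frac{f^{\ast 2}(x)}{2f(x)},
\]
which collapses the ratio condition $f_\gamma^{\ast 2}(x)\sim 2\wh{f_\gamma}(\gamma)f_\gamma(x)$ to the ratio condition $f^{\ast 2}(x)\sim 2f(x)$, i.e.\ $f\in\cals$ (recalling that $\wh f(0)=1$). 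Together with Step 2 this yields $f\in\cals\Leftrightarrow f_\gamma\in\cals_\gamma$.

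There is essentially no obstacle: the only point worth checking carefully is the uniformity clause in Definition 1.1(i) for $\gamma>0$, and the fact that the $\gamma$-moment condition built into $\cals_\gamma$ is automatic in our setting because it amounts to $1/\wh f(-\gamma)<\infty$, which is our standing hypothesis. Everything else is a rearrangement of the defining integrals, so I would present the proof as two short display-equation computations followed by one line identifying the two ratio conditions.
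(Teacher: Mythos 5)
Your proposal is correct and follows essentially the same route as the paper: the key identities $\wh{f_\gamma}(\gamma)=\wh f(-\gamma)^{-1}$ and $f_\gamma^{\ast 2}(x)/f_\gamma(x)=f^{\ast 2}(x)/\bigl(f(x)\,\wh f(-\gamma)\bigr)$ are exactly the ones the paper's proof rests on, and the $\call$--$\call_\gamma$ part is likewise dispatched by the pointwise ratio computation (the paper simply calls it ``immediate'' where you sketch the uniformity remark). No substantive difference.
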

Notice that the exponential tilt is preserved by the convolution, i.e. 
\[
 f_\gamma\ast g_\gamma (x)= e^{-\gamma x} f\ast g(x)/\wh {g\ast f} (-\gamma), 
\]
so that the relation $f\ast g \in \cals \Leftrightarrow f_\gamma \ast g_\gamma \in \cals_\gamma$ is also preserved. 
This means that our theories in Section \ref{sec:theory:closure:properites} and examples in Section \ref{sec:examples} for $\cals$ are also 
applied to the study of $\cals_\gamma$. 

\section{Examples of convolution of (non-)subexponential densities}
\label{sec:examples}
In this section we investigate closedness property of the convolution together with that of the mixture
by constructing suggestive examples, where the lemmas in the previous section are intensively used. 
Counterexamples to Lemma 1 of \cite{Kluppelberg:Villasenor:1991} (Ex.5) and Lemma 2 of \cite{Kluppelberg:Villasenor:1991} 
(Ex.6) are also given.

Recall that an example of [$f,g\in \cals\, \Rightarrow\, f\ast g \in \call\cap \cals^c$] 
was found by \cite{Kluppelberg:Villasenor:1991}. Here we step further to 
the complete answer of the convolution property. We   
construct examples of $f,g,f\ast g$ such that the other three cases hold, i.e. 
\begin{align*}
f,g\in \call \cap \cals^c\, &\Rightarrow\, f\ast g \in \cals,\hspace{2cm} \text{Example 1 (one-sided)}\\ 
f\in \call \cap \cals^c ,g\in \cals \, &\Rightarrow\, f\ast g \in \call  \cap \cals^c, \hspace{0.95cm}\text{ Example 2 (one-sided)} \\
f\in \call \cap \cals^c ,g\in \cals\, &\Rightarrow\, f\ast g \in \cals, \hspace{1.9cm} \text{ Example 3 (two-sided)}
\end{align*}
where all examples are 
constructed with the condition that the tails of $f$ and $g$ do not dominate the other. 
In summary, the classes $\cals$ and $\cals^c$ (within $\call$) 
are interchangeable by the convolution operation when the tail domination does not occur. 
Notice that from Lemmas \ref{lem:comv:rplus} and \ref{lem:comv:two-sided}, 
if a tail of $f$ or $g$ dominates the other, then (non-)subexponential property of 
the dominant one is likely to be succeeded.

Another aim of this section is to examine the effect of 
al.d. $(\cald)$ and a.n.i. $(\cala)$ properties on $\call$ or $\cals$.  
It is well-known that a density with a regularly varying tails satisfies 
$f\in \cals \cap \cala\, (\subset \cals\cap \cald)$, namely it is automatically subexponential 
and a.n.i. Then it remained to be seen that what happens to 
the property $\cals$ or $\call$ if $\cala$ or $\cald$ is violated. 
In \cite{Jiang:Wang:Cui:Chen:2019} a density of $\cals\cap\cald^c$ on $\R_+$ together with 
that of $(\call\setminus \cals) \cap \cald^c$ was constructed. Inspired by \cite{Jiang:Wang:Cui:Chen:2019}, we 
contract easier and newly developed examples.   
On one hand, in Ex.2, both $(\call\setminus \cals)\cap \cald^c$ (by $f$) and $\cals \cap \cald^c$ (by $g$) 
are recovered, on the other hand in Ex.6 we give an example of $(\call \setminus \cals) \cap \cala \,(\subset (\call \setminus \cals) \cap \cald)$. 
In conclusion, both al.d. and a.n.i. properties may not be decisive factors for $\call$ or $\cals$ when 
the density is on the positive-half.

The situation is different in the two-sided density case. 
Firstly due to Lemma 2, even the density $f^+$ on $\R_+$ is subexponential, if $f^+\in \cald^c$, then
we can always choose the density $f^-$ on $\R_-$ 
such that the mixture $f=pf^{+}+(1-p)f^-$ is not subexponential.
Moreover, if we control the subexponentiality of the convolution or the mixture  
from the right-side tail, then 
the condition $\cald$ is crucial (see Lemmas \ref{lem:conv:sum:R} and \ref{lem:comv:two-sided}). Indeed, to construct 
 Example 3, we exploit Lemmas \ref{lem:conv:sum:R} with the al.d. property $\cald$. 
However, we remark that influence of $\cald$ (or $\cala$) on the property $\cals$ in the two sided case should be 
further studied. Particularly, the effect on the convolution closure property is not completely solved. 
This is an open question.

Finally, we remark that most of our density examples could be converted to distribution examples 
by using l'H\^opital's rule. We leave this to the readers who are interested.

In these examples $c$ denotes arbitrary positive constant of which values are not of interest. If we mention the function $\alpha(x)$, 
we always assume that $\alpha(x)<x/2$ and $\alpha(x)\to\infty$ as $x\to\infty$. 
\\

\noindent
{\bf Example 1}: 
$f,g\in \call \cap \cals^c\, \Rightarrow\, f\ast g \in \cals$. \vspace{2mm}\\
Let $f,g$ be densities on $\R_+$. We are starting with $h=2^{-1}(f+g)$ and show $h\in \cals$. 
Then by Lemma  \ref{lem:conv:sum} $(\mathrm{i})$ we could have $f\ast g \in \cals$, where 
exact definitions of $f$ and $g$ are given afterwards. 
For $n\ge 4$ define sequences of points $\{a_n,b_n\}$ as 
\[
 a_n=e^n,\quad b_n=2^{-1}(e^n+e^{n+1})+e^{\sqrt{n}}
\]
and set intervals 
\[
 \rI_{n,1}= (a_n,b_n],\quad \rI_{n,2}=(b_n,a_{n+1}]\quad \text{and}\quad \rI_n=\rI_{n,1}\cup \rI_{n,2}.
\]
Define a piece-wise linear function $h$ by 
\begin{align*}
h(x)= \left\{
\begin{array}{ll}
e^{-4}/16 & x\in [0,e^{-4}]  \\
e^{-n}n^{-2} & x\in \rI_{n,1} \\
e^{-n}n^{-2} +\frac{h(a_{n+1})-h(b_n)}{a_{n+1}-b_n}(x-b_n) & x\in \rI_{n,2}, \\
\end{array}
\right.
\end{align*}
where $h'(x)=0$ on $\rI_{n,1}$ and on $\rI_{n,2}$,  
\begin{align}
\label{h':ex1}
 h'(x) 
= \frac{h(a_{n+1})-h(b_n)}{a_{n+1}-b_n} = \frac{2e^{-2n}n^{-2}(e^{-1}n^2 (n+1)^{-2}-1)}{e-1-2e^{-n+\sqrt{n}}}<0. 
\end{align}
Notice that strictly speaking, $h$ does not have derivatives on points $\{a_n,b_n\}$ and only 
left-hand and right-hand derivatives exist. This does not cause any problems, 
since the values of slopes $h'(x)$ on particular intervals are crucial. In the following examples, we 
define derivatives for piece-wise linear functions without mentioning this caution.

Clearly $\int_{\R_+}h(x)dx <\infty$ (calculate $\int_{\rI_n}h(x)dx$ and take the sum). Here we ignore the normalizing constant 
and regard $h$ as a density. 
First notice that 
\[
 \wt h'(x) := \sup_{x\in \rI_{n-1,2}\cup \rI_n}|h'(x)| =o \big(\inf_{x\in \rI_n} h(x)\big)
\]
and for any $x\in \rI_n$ and $t>0$, $h(x-t)$ satisfies 
\[
 h(x)-\wt h'(x) t \le h(x-t) \le h(x)+ \wt h'(x) t,
\]
so that $h\in \call$ is clear.

Before proving that $h\in \cals$, we bound $h(x)$ by $\ell(x)=2(1+e)x^{-1}(\log x)^{-2}$ in the tail. 
By direct calculations 
\begin{align*}
h(a_n) = e^{-n}n^{-2} &< 2(1+e) e^{-n}n^{-2} =\ell(a_n), \\
h(b_n) = e^{-n}n^{-2} &< 4e^{-n}n^{-2}\{1+2(e+1)^{-1}e^{-n+\sqrt{n}}\}^{-1} \\
&\quad \times \{1+n^{-1}\log ((e+1+2e^{-n+\sqrt{n}})/2)\}^{-2} = \ell (b_n). 
\end{align*}
Since $\ell(x)$ is convex, $h$ possibly intersect $\ell$ on $\rI_{n,2}$. 
This is impossible. Based on 
\[
 \ell'(x)=-2 (1+e)x^{-2} (\log x)^{-2}- 4 (1+e)x^{-2} (\log x)^{-3},
\]
we observe that for sufficiently large $n$,  
\[
 |\ell' (a_{n+1})|> 2 e^{-1}(1+e^{-1}) e^{-2n}n^{-2}(1+n^{-1})^{-2}
\]
is larger than $|h'(x)|$ of \eqref{h':ex1}: $|h'(x)|\sim (2/e) e^{-2n}n^{-2}$, so that $\ell (x)$ is steeper than $h(x)$ on $\rI_{n,2}$.

Finally we prove $h\in \cals$, by showing that 
\begin{align}
\label{h:j1:j2}
 h^{\ast 2}(x)= \Big(
2 \int_{0}^{\alpha(x)} +\int_{\alpha(x)}^{x-\alpha(x)} 
\Big) h(x-y)h(y)dy =:2J_1(x) + J_2(x) \sim 2 h(x)
\end{align}
with the function $\alpha(x)$. 
For $x\in \rI_n$, take $\alpha(x)=e^{n}n^{-\delta}=:\alpha_n,\,\delta\in(0,1)$. 
We start to check $J_2(x)=o(h(x))$.
Observe that for $x\in \rI_n$, $y,x-y\in [\alpha_n,x-\alpha_n]$ in the integral $J_2$ 
and recall that $h(x)$ is dominated by 
a scaled version of $\ell (x)$ for sufficiently large $x$. 
Then,
\begin{align*}
J_2(x) &\le c \ell (\alpha_n)^2 a_{n+1} 
 \sim c e^{-n}n^{2\delta -4}
= o\big(\inf_{x\in\rI_n} h(x)\big). 
\end{align*}
The remaining thing is to prove that $h$ is $\alpha$-insensitive. Notice that 
since $x-\alpha_n \in \rI_{n-1,2}\cup \rI_n$, we have 
\[
 h(x)-\wh h'(x)\alpha_n \le h(x-\alpha_n) \le h(x)+\wh h'(x)\alpha_n,
\]
where 
\[
 \wh h'(x)=\sup_{x \in \rI_{n-1,2}\cup \rI_n}|h'(x)| \le ce^{-2n}n^{-2},
\]
so that 
\[
 \wh h'(x) \alpha(x) \le 
c e^{-n} n^{-(2+\delta)}=o\big(\inf_{x\in \rI_n} h(x)\big).
\]
This implies the $\alpha$-insensitivity of $h$, i.e. $J_1(x)\sim h(x)$.  

Next we proceed to the definitions of $f$ and $g$. Since they are piece-wise linear,  
we prepare several points and values of $f$ at these points, and give line segments to 
connect the points. 
Let $a_{n,i},i=0,\ldots,6$ be points such that 
\begin{align*}
& a_n=a_{n,0}=e^n,\,a_{n,1}=a_{n,5}-e^{\sqrt{n}}(3+e),\,a_{n,2}=a_{n,5}-e^{\sqrt{n}}(2+e),\, 
a_{n,3}= a_{n,5} - e^{\sqrt{n}}(1+e),\\
& a_{n,4}= a_{n,5} - e^{\sqrt{n}},\, a_{n,5}=2^{-1}(e^n+e^{n+1})\quad \text{and}\quad a_{n,6}= a_{n,5}+e^{\sqrt{n}}
\end{align*}
and set intervals 
\[
 \rI_{n,i}=(a_{n,i-1},a_{n,i}],\,i=1,\ldots,6\quad \text{and}\quad \rI_{n,7}=(a_{n,6},a_{n+1}]\quad \text{and}\quad 
\rI_n=(a_n,a_{n+1}]. 
\]
The definitions of $f$ and $g$ are based on 
two piece-wise linear functions $p,q$ on $\rI_n$, which are symmetric w.r.t. $e^{-n}n^{-2}$ on $\cup_{i=1}^6 \rI_{n,i}$ and 
they are the same on $\rI_{n,7}$. First we define 
$p$. 
Let 
\begin{align*}
& p(a_n)=p(a_{n,1})=p(a_{n,3})=p(a_{n,4})=p(a_{n,6})=e^{-n}n^{-2} \\
& p(a_{n,2})=2 e^{-n}n^{-2} -e^{-n}n^{-3-\vep},\,\vep\in(0,1)\quad \text{and}\quad p(a_{n,5})=e^{-n}n^{-3-\vep}, 
\end{align*}
and $p(x)$ is linear between these points, i.e. $p(a_{n,1}),\,p(a_{n,2})$ and $p(a_{n,3})$ constitute a mountain and 
$p(a_{n,4}),\,p(a_{n,5})$ and $p(a_{n,6})$ constitute a valley, and $p(x)$ is flat on $\rI_{n,1}\cup \rI_{n,4}$. 
Then define $q$ be such that $2^{-1}(p(x)+q(x))=e^{-n}n^{-2},\,x\in \rI_n$ holds.

For the latter use, we calculate the integrals of $p,q$ on $\rI_n$ as 
\begin{align}
\label{ex1:integral}
\begin{split}
& \int_{\rI_n}p(x)dx = \int_{\rI_n}q(x)dx \\
&\quad  =n^{-2} \Big(
\frac{e-1}{2}+e^{-n+\sqrt{n}}
\Big)+\frac{n^{-2}}{2} \Big(
1+e^{-1}\Big(\frac{n}{1+n}\Big)^2
\Big)\Big(
\frac{e-1}{2}-e^{-n+\sqrt{n}}
\Big) \\
&\quad \le cn^{-2}.
\end{split}
\end{align}
Now we define $f$ and $g$. Let $f(x)=g(x)=e^{-4}/16$ on $[0,e^{-4}]$ and for $k\ge 2$, put 
\begin{align*}
 f(x):=\left\{
\begin{array}{ll}
p(x) & x\in \rI_{2k}  \\
q(x) & x\in \rI_{2k+1} 
\end{array}
\right.\quad \text{and} \quad 
 g(x):=\left\{
\begin{array}{ll}
q(x) & x\in \rI_{2k}  \\
p(x) & x\in \rI_{2k+1},  
\end{array}
\right.
\end{align*}
so that one could see that $h=2^{-1}(f+g)$. 

The normalizing constant for both $f$ and $g$ is that for $h$.  
For convenience, we ignore the constant again and regard $f$ and $g$ as densities. 
The maximum absolute value of slopes is that of $\rI_{n,2},\rI_{n,3},\rI_{n,5}$ or $\rI_{n,6}$ (they are the same), i.e. 
\[
 f'(x) \mid_{x\in \rI_{n,2}} =e^{-n-\sqrt{n}}n^{-2}(1-n^{-1-\vep}) =o \big(\inf_{x\in \rI_n} f(x)\big),
\]
so that for sufficiently large $x$  
\[
 \sup_{x\in \rI_n}|f'(x)| \le \sup_{x\in \rI_{n-1}} |f'(x)| =o\big(
\inf_{x\in \rI_n}f(x)
\big).
\]
Therefore, for $x\in \rI_n$ and any fixed $t>0$,
\[
 f(x)-\sup_{x\in \rI_{n-1}}|f'(x)|  t \le f(x-t) \le f(x)+ \sup_{x\in \rI_{n-1}}|f'(x)| t,
\]
which implies $f\in \call$. By construction this implies $g\in \call$ also. 

To prove $f,g\notin \cals$, it is enough to see that $f\not \in \cals$ by symmetry. We consider the quantity 
\[
 \int_{\alpha(x)}^{x-\alpha(x)} \frac{f(x-y)}{f(x)}f(y)dy
\]
for the function $\alpha(x)$. 
We take $\alpha(x)=\sqrt{\log x-c'}$ with 
$c':=\log(2^{-1}(e+1))$ and then with $x=a_{n^2,5}$, we have $\alpha(x)=n$. Let $n\ge 4$ be even numbers and 
then for sufficiently large $n$ 
\[
 \int_{n}^{a_{n^2,5}-n}\frac{f(a_{n^2,5}-y)}{f(a_{n^2,5})}f(y)dy \ge \int_{e^n}^{e^{n+1}}\frac{f(a_{n^2,5}-y)}{f(a_{n^2,5})} f(y)dy, 
\]
where one notices that $a_{n^2,5}-y \in [a_{n^2,3},a_{n^2,4}]$ when $ y\in [e^n,e^{n+1}]$ and 
\[
 f(a_{n^2,5}-y)=f(a_{n^2,3}) =e^{-n^2}n^{-4},
\]
so that it is not difficult to observe that 
\[
 \frac{ f(a_{n^2,5}-y)}{ f(a_{n^2,5})} \ge cn^{2(1+\vep)}. 
\]
In view of \eqref{ex1:integral} 
\[
 \int_{e^n}^{e^{n+1}}\frac{f(a_{n^2,5}-y)}{f(a_{n^2,5})} f(y)dy \ge cn^{2\vep} \to\infty\quad (\text{as}\quad n\to\infty). 
\]
Now due to \cite[Theorem 4.7 (ii)]{Foss:Korshunov:Zachary:2013}, we see that $f, g\notin \cals$. 
However, by applying Lemma \ref{lem:conv:sum} $(\mathrm{i})$ to $h=2^{-1}(f+g)\in\cals$, we also have $f\ast g\in \cals$. 
\hfill $\Box$\\
 
\noindent
{\bf Example 2}: $f \in \call \cap \cals^c,\,g\in\cals\, \Rightarrow\, f\ast g \in \call \cap \cals^c$. \\
Let $f,g$ be densities on $\R_+$, both of which are piece-wise linear.   
We prepare several points and values of $f$ and $g$ at these points, and give line segments to 
connect the points. 
First we define sequences of points $\{a_{n,i}\},\,i=0,\ldots,3$ as 
\[
a_n=a_{n,0}= e^n,
\,a_{n,1}=3/2\cdot e^n,\,a_{n,2}= 2^{-1}(e^n+e^{n+1})-e^{\sqrt{n}},\,
a_{n,3}=2^{-1}(e^n+e^{n+1})
\]
and intervals 
\[
 \rI_{n,i}=(a_{n,i-1},a_{n,i}],\,i=1,2,3\quad \text{and}\quad \rI_{n,4}=(a_{n,3},a_{n+1}]\quad \text{and}\quad \rI_n= (a_n,a_{n+1}]. 
\]
Then $f$ takes the following values on these points, 
\[
f(a_{n,i})=e^{-n}n^{-(3+\vep)},\,\vep\in (0,1),\, i=0,1,3, 
\quad \text{and}\quad f(a_{n,2})= e^{-n}n^{-2},
\]
and on $[0,e^4]$, $f(x)=e^{-4}4^{-(3+\vep)}$. Moreover, $f$ connects these points with line segments. 
On the other hand $g$ takes the following values on these points 
\[
g(a_{n,i})=e^{-n}n^{-(3+\vep)},\, i=0,\ldots,3  
\quad \text{and}\quad 
g(x)=e^{-4}4^{-(3+\vep)},\,x\in [0,e^{4}], 
\]
and $g$ is also piece-wise linear, i.e. $g$ connects these points with line segments.

The slopes of $f$ are 
\begin{align}
\label{derivative:f:ex2}
\begin{split}
f'(x)= \left\{
\begin{array}{lll}
\frac{f(a_{n,2})-f(a_{n,1})}{a_{n,2}-a_{n,1}}= 
\frac{2e^{-2n} n^{-2}(1-n^{-(1+\vep)})}{e-2-2e^{-n+\sqrt{n}}} & >0, & x\in \rI_{n,2}  \\
\frac{f(a_{n,3})-f(a_{n,2})}{a_{n,3}-a_{n,2}}= e^{-n-\sqrt{n}} n^{-2}(n^{-(1+\vep)}-1) & <0, & x\in \rI_{n,3} \\
\frac{f(a_{n+1})-f(a_{n,3})}{a_{n+1}-a_{n,3}}
= 2 e^{-2 n} n^{-(3+\vep)}(e-1)^{-1}\big\{e^{-1}(n/(n+1))^{3+\vep} -1\big\} &<0, & x\in \rI_{n,4} \\
0 & & \text{otherwise},  
\end{array}
\right.
\end{split}
\end{align}
and those of $g$ are 
\begin{align*}
g'(x)= \left\{
\begin{array}{lll}
f'(x)\mid_{x\in \rI_{n,4}} &<0, & x\in \rI_{n,4} \\
0 & & \text{otherwise}. 
\end{array}
\right.
\end{align*}
Notice that
\[
 \sup_{x\in \rI_{n-1}\cup \rI_n} |f'(x)|=o\big(
\inf_{x\in\rI_{n}} f(x)\big)
\quad \text{and}\quad  \sup_{x\in \rI_{n-1}\cup \rI_n} |g'(x)|=o\big(
\inf_{x\in\rI_{n}}g(x)
\big), 
\]
so that both $f\in\call$ and $g\in \call$ hold. 

Next we observe the integrability of $f,g$ and calculate 
values of integrals $\int_{\rI_n}g(x)dx$ and $\int_{\rI_n}f(x)dx$. 
By direct calculations, we have  
\begin{align}
\begin{split}
\label{area:intg:intr}
\int_{\rI_n}g(x)dx &=  4^{-1}n^{-(3+\vep)} (e-1)(3+e^{-1}(1+1/n)^{-(3+\vep)}),\\
\text{and}\quad \int_{\rI_n}f(x)dx &= \int_{\rI_n}g(x)dx + 
4^{-1}n^{-2}(1-n^{-(1+\vep)})(e-2), 
\end{split}
\end{align}
so that they are integrable. The normalizing constants are respectively 
$c_g=4^{-(3+\vep)}+ \sum_{n=4}^\infty \int_{\rI_n}g(x)dx<\infty$
and $c_f=4^{-(3+\vep)}+\sum_{n=4}^\infty \int_{\rI_n}f(x)dx<\infty$. 
We ignore the constants and regard $f$ and $g$ as densities.

To obtain $g\in \cals$ we bound $g(x)$ by $\ell(x)=(e+1)x^{-1}(\log x)^{-2}$ in the tail. 
By a direct calculation for sufficiently large $n$ 
\begin{align*}
g(a_{n}) &=g(a_{n,1}) =g(a_{n,2})=g(a_{n,3}) =  e^{-n}n^{-(3+\vep)} \\
&< 2e^{-n}  \big(n+\log((e+1)/2) \big)^{-2} = \ell (a_{n,3}). 
\end{align*}
Since $\ell(x)$ is non-increasing and convex, $g$ possibly intersects $\ell$ on $\rI_{n,4}$. 
This is impossible. Based on 
\[
 \ell'(x) =-(e+1) x^{-2} (\log x)^{-2}-2(e+1)x^{-2}(\log x)^{-3},
\]
we observe that for sufficiently large $n$
\begin{align*}
 |\ell'(a_{n+1})| &> (e+1)e^{-2(n+1)}(n+1)^{-2} 
> |g'(x)|\mid_{x\in \rI_{n,4}},
\end{align*}
so that $\ell(x)$ is steeper than $g(x)$ on $\rI_{n,4}$. 

We return to the proof of $g\in \cals$ and show that 
\begin{align*}
 g^{\ast 2}(x)= \Big(
2 \int_{0}^{\alpha(x)} +\int_{\alpha(x)}^{x-\alpha(x)} 
\Big) g(x-y)g(y)dy =:2J_1(x) + J_2(x) \sim 2 g(x)
\end{align*}
for the function $\alpha(x)$. 
For $x\in \rI_n$, we take $\alpha(x)=e^{-n}n^{-\delta}=:\alpha_n,\,\delta\in (0,2^{-1}(1-\vep))$. 
We start to check $J_2(x)=o(g(x))$. Notice that for $x\in \rI_n,\,y,x-y\in [\alpha_n,x-\alpha_n]$ in $J_2$ and recall that  
$g(x)$ is bounded by $\ell(x)$ for sufficiently large $x$. Therefore,  
\begin{align*}
 J_2(x) \le \ell^2(\alpha_n) a_{n+1} \sim ce^{-n} n^{2\delta -4}=o\big(
\inf_{x\in \rI_n} g(x)
\big).
\end{align*}
Then, it suffices to observe that $g$ is $\alpha$-insensitive, i.e. $g(x-\alpha_n)\sim g(x)$ for $x\in \rI_n$. 
Observe that 
\[
 g(x)-\wh g'(x)\alpha_n \le g(x-\alpha_n) \le g(x)+\wh g'(x)\alpha_n 
\]
where 
\[
 \wh g'(x) = \sup_{x\in \rI_{n-1}\cup \rI_n} |g'(x)| \le ce^{-2 n}n^{-(3+\vep)}. 
\]
Thus 
\begin{align*}
 \wh g'(x) \alpha_n \le c e^{-n} n^{-(3+\vep+\delta)} =o\big(
\inf_{x\in \rI_n}g(x)
\big), 
\end{align*}
so that $g(x)\sim g(x-\alpha_n)$. Now we establish $g\in \cals$.

Next we see that $f\in \call\cap \cals^c$. 
Again we study the following quantity 
\[
 \int_{\alpha(x)}^{x-\alpha(x)} \frac{f(x-y)}{f(x)}f(y)dy,
\]
where we take $\alpha(x)=\sqrt{\log x-c'},\,c'=\log(2^{-1}(e+1))$ and 
then with $x=a_{n^2,3}$ we  have $\alpha(x)=n$, so that for sufficiently large $n$
\[
 \int_{n}^{\alpha_{n^2,3}-n} \frac{f(a_{n^2,3}-y)}{f(a_{n^2,3})}f(y)dy \ge \int_{e^n}^{e^{n+1}} 
\frac{f(a_{n^2,3}-y)}{f(a_{n^2,3})}f(y)dy. 
\]
Observe that $a_{n^2,3}-y\in [a_{n^2,1},a_{n^2,2}]$ when $y\in [e^{n},e^{n+1}]$ and thus 
\begin{align*}
 f(a_{n^2,3}-y) &\ge f(a_{n^2,2}) - \frac{2e^{-2n^2}n^{-4} (1-n^{-2(1+\vep)})}{e-2-2e^{-n^2+n}}(e^{n+1}-e^n) \\
& \ge f(a_{n^2,2}) -c e^{-2n^2+n}n^{-4} \\
& = f(a_{n^2,2}) -o(f(a_{n^2,3})).
\end{align*}
It is not difficult to observe that 
\begin{align*}
 \frac{f(a_{n^2,3}-y)}{f(a_{n^2,3})} \ge c \frac{ e^{-n^2}n^{-4}}{e^{-n^2}n^{-2(3+\vep)}} = c n^{2(1+\vep)}
\end{align*}
and in view of \eqref{area:intg:intr} 
\[
 \int_{e^n}^{e^{n+1}} \frac{f(a_{n^2,3}-y)}{f(a_{n^2,3})}f(y)dy \ge cn^{2\vep} \to \infty\quad \text{as}\quad x=a_{n^2,3}\to\infty.
\]
Thus $f\in\cals^c$.

Now we formally write true densities as $f_0=c_f f$ and $g_0=c_g g$ with the normalizing constants 
$c_f,c_g$ respectively. 
We define $h_0$ by 
\[
 h_0= c_g/(c_f+c_g)f_0 + c_f/(c_f+c_g) g_0 = c_f c_g/(c_f+c_g) (f+g), 
\]
i.e. $h_0$ is the mixture $h_0=p f_0+(1-p)g_0$ with $p=c_g/(c_f+c_g)$. 
For convenience, we focus on the function $h=(c_f+c_g)/(c_fc_g) 2^{-1} h_0=2^{-1}(f+g)$, 
which satisfies $h(x)=e^{-4}4^{-(3+\vep)}$ for $x\in[0,e^{-4}]$, 
\begin{align*}
& h(a_n)=h(a_{n,1})=h(a_{n,3})=e^{-n}n^{-(3+\vep)},\quad 
 \text{and}\quad h(a_{n,2})= 2^{-1}e^{-n}n^{-2}(1+n^{-(1+\vep)}) 
\end{align*}
and it is piece-wise linear, i.e. $h$ connects $h(a_{n,i})$ with line segments. 
By the same logic as that for $f$, $h\in \call\cap \cals^c$ (so that $h_0 \in \call \cap \cals^c$) 
could be concluded. 
In view of Lemma \ref{lem:conv:sum} $(\mathrm{i})$ one notices that $f_0\ast g_0\in \cals$ implies $h_0 \in \cals$. 
Thus by taking the contraposition $f_0\ast g_0 \in \cals^c$ follows. Now by ignoring the normalizing constants which are not important, 
or 
re-define $f$ and $g$, we obtain $f\ast g\in \cals^c$.  
\hfill $\Box$\\

\noindent
{\bf Example 3}: $f \in \call \cap \cals^c,\,g\in\cals\, \Rightarrow\, f\ast g \in \call \cap \cals$. \\
We start with $f$, where we separately give $f_+$ and $f_-$, so that 
$f=c_+f_++c_-f_-$ with $c_+=\int_{[0,\infty)}f(x)dx$ and $c_-=\int_{(-\infty,0)}f(x)dx$.

First we set $f_+$.  
For $n\ge 4$ define sequences of points $\{a_n,a_{n,1},a_{n,2}\}$ as 
\[
 a_n=e^n,\quad a_{n,1}=e^n+e^n/\log n,\quad a_{n,2} = 2^{-1}(e^n+e^{n+1})
\]
and set intervals 
\[
 \rI_{n,1}=(a_n,a_{n,1}],\,\rI_{n,2}=(a_{n,1},a_{n,2}],\,\rI_{n,3}=(a_{n,2},a_{n+1}]\quad \text{and}\quad \rI_n=(a_n,a_{n+1}]. 
\]
On these points $f_+$ takes the following values 
\[
 f_+(a_n)=f_+(a_{n,2})= e^{-n}n^{-2},\, f_+(a_{n,1})= e^{-n}n^{-(2+\vep)}\quad \text{with}\quad \vep\in (0,2/3)
\]
and for $x\in[0,e^4],\,f_+(x)=e^{-4}/16$. 
On the intervals between $\{a_n,a_{n,1},a_{n,2},a_{n+1}\}$, $f_+(x)$ is linear, i.e. $f$ connects the points with line segments.  
We specify their slopes: 
\begin{align*}
\begin{split}
f_+'(x)= \left\{
\begin{array}{lll}
\frac{f_+(a_{n,1})-f_+(a_{n})}{a_{n,1}-a_{n}}= 
e^{-2n}\log n\cdot n^{-2}(n^{-\vep}-1) & <0, & x\in \rI_{n,1}  \\
\frac{f_+(a_{n,2})-f_+(a_{n,1})}{a_{n,2}-a_{n,1}}= \frac{2e^{-2n} n^{-2}(1-n^{-\vep})}{e-1-2(\log n)^{-1}} & >0, & x\in \rI_{n,2} \\
\frac{f_+(a_{n+1})-f_+(a_{n,2})}{a_{n+1}-a_{n,2}}
= \frac{2e^{-2n}n^{-2}\big(e^{-1}n^2(n+1)^{-2}-1\big)}{e-1} &<0, & x\in \rI_{n,3}.  
\end{array}
\right.
\end{split}
\end{align*}
Notice that 
\[
 \sup_{x\in \rI_{n-1,3}\cup \rI_n}|f_+'(x)|=o\big(\inf_{x\in \rI_n} f_+(x)\big) 
\]
and thus $f_+\in \call$.

Next, we check the integrability of $f_+$. 
For this we observe that $f_+$ is dominated by $u(x)=2(1+e)x^{-1} (\log x)^{-2}$ in the tail.
By a direct calculation 
\begin{align*}
f_+(a_n) &= e^{-n}n^{-2} <2(1+e)e^{-n}n^{-2} =u(a_n), \\
f_+(a_{n,1}) &< f_+(a_{n,2})=e^{-2}n^{-2} < 4 \big(1+n^{-1} \log((e+1)/2) \big)^{-2} e^{-n}n^{-2}=u(a_{n,2}). 
\end{align*}
Since $u(x)$ is non-increasing and convex $f_+$ and $u$ possibly intersect on $\rI_{n,1}$ or $\rI_{n,3}$. 
This is impossible. Based on the derivative of $u$, 
\begin{align}
\label{deriv:u}
 u'(x)=-2(1+e) x^{-2} (\log x)^{-2} -4 (1+e)x^{-2} (\log x)^{-3}, 
\end{align}
for sufficiently large $n$ we have 
\[
 |f_+'(x)| \mid_{x\in\rI_{n,1}} =e^{-2n}\log n \cdot n^{-2} (1-n^{-\vep})>|u'(a_n)|
\]
and the slope of $f_+(x)$ is steeper than that of $u(x)$ on $\rI_{n,1}$. In the meanwhile on $\rI_{n,3}$ 
\[
 |u'(a_{n+1})|=\frac{2}{e} \frac{1+e^{-1}}{(1+n^{-1})^{2}} e^{-2n}n^{-2}\quad\text{and}\quad |f_+'(x)|=\frac{2}{e} 
\Big(
\frac{1-e^{-1}n^2(n+1)^{-2}}{1-e^{-1}}
\Big) e^{-2n}n^{-2}, 
\]
so that $u(x)$ is steeper than $f_+(x)$ on $\rI_n$ for sufficiently large $x$. 
Now for all sufficiently large $x$, $f_+(x)<u(x)$ and thus $f_+$ is integrable. Since the 
scaling constant of $f_+$ is not essential until we consider the convolution with $g$, 
we ignore it and regard $f_+$ as a density for the moment.

Finally, we prove that $f_+\in \cals$ and to this end we show that 
\begin{align*}
 f_+^{\ast 2}(x)&= \int_0^x f_+(x-y)f_+(y)dy = \Big(
2\int_{0}^{\alpha(x)} +\int_{\alpha(x)}^{x-\alpha(x)}
\Big) f_+(x-y)f_+(y)dy \\
&=: 2J_1(x)+J_2(x)\sim 2f_+(x) 
\end{align*}
for the function $\alpha(x)$. 
Take $\alpha(x) =e^{-n}n^{-\delta}=:\alpha_n,\,\delta\in (\vep,1-\vep/2)$ for $x\in\rI_n$. 
We start to check $J_2(x)=o(f_+(x))$. 
Observe that for $x\in \rI_{n}$, $y,x-y\in [\alpha_n,x-\alpha_n]$ and recall that $f(x)$ is dominated by 
a scaled version of $u(x)$ for sufficiently large $x>0$. Therefore 
\begin{align}
\begin{split}
\label{evaluate:j2}
 J_2(x) &\le c u^{2}(\alpha_n) a_{n+1} \le ce^{-n} n^{2\delta-4} (1-n^{-1}\delta\log n)^{-4} \\
& \sim c e^{-n}n^{2\delta-4} =o\big(\inf_{x\in \rI_n}f_+(x)\big).
\end{split}
\end{align}
Now it suffices to observe that $f_+$ is $\alpha$-insensitive, i.e. $f_+ (x-\alpha_n)\sim f_+(x)$ for $x\in \rI_n$. 
Observe that 
\[
 f_+(x)-\wt f_+'(x)\alpha_n \le f_+(x-\alpha_n) \le f_+(x)+\wt f_+'(x)\alpha_n,
\]
where 
\[
 \wt f_+'(x) =\sup_{x\in \rI_{n-1,3}\cup \rI_n} |f'_+(x)| \le c e^{-2n} n^{-2} \log n . 
\]
Thus 
\begin{align}
\label{evaluate:j1}
 \wt f_+'(x)\alpha_n \le c e^{-n} n^{-(2+\delta)} \log n =o\big(
\inf_{x\in \rI_n} f_+(x)
\big).
\end{align}
In view of \eqref{evaluate:j2} and \eqref{evaluate:j1} we establish $f_+ \in \cals$. 
Then since $f_+\in \cals \cap \cald^c$, we choose $f_-$ such that 
$f=c_+f_++c_-f_-$ and $f\in \cals^c$, which is possible by Lemma \ref{lem:ani:R}.

Next we define $g$ on $\R_+$ so that $f_++g \in \cals \cap \cald$ holds. 
Our strategy is the same as that of Ex.1. We take the same points $\{a_n,a_{n,1},a_{n,2}\}$ and 
intervals $\rI_{n,i},\,i=1,2,3$, $\rI_n$ as those for $f_+$. On these points $g$ takes values 
\[
 g(a_n) = g(a_{n,2}) =e^{-n}n^{-2}\quad \text{and}\quad g(a_{n,1})=e^{-n}n^{-2}(2-n^{-\vep}).
\]
We specify slopes of $g$ 
\begin{align*}
\begin{split}
g'(x)= \left\{
\begin{array}{lll}
\frac{g(a_{n,1})-g(a_{n})}{a_{n,1}-a_{n}}= 
e^{-2n}\log n\cdot n^{-2}(1-n^{-\vep}) & >0, & x\in \rI_{n,1}  \\
\frac{g(a_{n,2})-g(a_{n,1})}{a_{n,2}-a_{n,1}}= \frac{2e^{-2n} n^{-2}(n^{-\vep}-1)}{e-1-2(\log n)^{-1}} & <0, & x\in \rI_{n,2} \\
f'(x) \mid_{x\in\rI_{n,3}} &<0, & x\in \rI_{n,3}.  
\end{array}
\right.
\end{split}
\end{align*}
Observe that 
\[
 \sup_{x\in \rI_{n-1,3}\cup \rI_n}|g'(x)|=o\big(\inf_{x\in \rI_n} g(x)\big).  
\]
Then, similarly as before, we see $g\in\ \call$. We observe that $g$ is dominated by $u(x)$ (defined above) in the tail. 
By direct calculations,  
\begin{align*}
 g(a_n) &= e^{-n}n^{-2} < 2(1+e)e^{-n}n^{-2} = u(a_n), \\
 g(a_{n,1}) &= e^{-n}n^{-2} (2-n^{-\vep}) < \frac{2(1+e)}{(1+(\log n)^{-1})(1+n^{-1} \log(1+(\log n)^{-1}))^2}e^{-n}n^{-2}
=u(a_{n,1}), \\
 g(a_{n,2}) &= e^{-n}n^{-2} < 4 (1+n^{-1} \log ((e+1)/2))^{-2} e^{-n}n^{-2} =u(a_{n,2}). 
\end{align*}
Since $u (x)$ is convex, $g$ and $u$ possibly intersect on $\rI_{n,2}$ or $\rI_{n,3}$. 
Based on \eqref{deriv:u}, we observe that for sufficiently large $n$ on $\rI_{n,2}$
\[
 |g'(x)| = \frac{2 e^{-2n}n^{-2}(1-n^{-\vep})}{e-1-2 (\log n)^{-1}} \sim \frac{2}{e-1} e^{-2n}n^{-2}
\]
and on $\rI_{n,3}$ 
\begin{align*}
 |u'(a_{n,2})| &> \frac{8}{e+1} e^{-2n}n^{-2} (1+n^{-1}\log ((e+1)/2))^{-2} \sim \frac{8}{e+1} e^{-2n}n^{-2},
\end{align*}
so that $|g'(x)| <|u'(x)|$ on $\rI_{n,2}$ for a sufficiently large $x$. 
Similar to the case of $f_+$, $|g'(x)|<|u'(x)|$ on $\rI_{n,3}$. 
Thus $u(x)$ dominates $g(x)$ for sufficiently large $x$, and $g$ is integrable. 
Again we ignore the scaling constant for $g$ for the time being.

Finally, we prove that $g\in \cals$ and again we show that for $x\in \rI_n=(a_n,a_{n+1}]$,  
\begin{align*}
 g^{\ast 2}(x) &= \Big(
2 \int_{0}^{\alpha(x)} + \int_{\alpha(x)}^{x-\alpha(x)}
\Big) g(x-y)g(y)dy \\
& =: 2J_1(x) +J_2(x) \sim g(x) 
\end{align*}
for the function $\alpha(x)$. Here we take the same $\alpha(x)=\alpha_n=e^n n^{-\delta}$ for $x\in \rI_n$ as that of $f$ but in this 
case $\delta\in (0,1)$. 
We start to check that $J_2(x)=o(g(x))$. Recall that for $x\in \rI_n$, $y,x-y \in [\alpha_n,x-\alpha_n]$ and $g(x)$ 
is dominated by a scaled version of $u(x)$ for sufficiently large $x$. Therefore 
\begin{align*}
 J_2 (x) &\le cu^2(\alpha_n) a_{n+1} \le c e^{-n}n^{2\delta-4}(1-n^{-1} \delta \log n)^{-4} \\
& \sim c e^{-n}n^{2\delta -4} =o\big(
\inf_{x\in \rI_n} g(x)
\big).
\end{align*}
Now it suffices to observe that $g$ is $\alpha_n$-insensitive, i.e. 
$g(x-\alpha_n)\sim g(x)$ for $x\in \rI_n$. Observe that 
\[
 g(x)-\wt g'(x) \alpha_n\le g(x-\alpha_n) \le g(x) +\wt g'(x) \alpha_n,
\]
where 
\[
 \wt g'(x) = \sup_{x\in \rI_{n-1,3}\cup \rI_n}|g'(x)| \le c e^{-2n} n^{-2}\log n.
\]
Noticing 
\[
 \wt g'(x)\alpha_n \le c e^{-n} n^{-(2+\delta)} \log n =o\big(
\inf_{x\in \rI_n} g(x)
\big).
\]
We see $g(x-\alpha_n)\sim g(x)$, so that $J_1(x)\sim g(x)$. Thus $g\in \cals$.

For the following argument, we denote by $c_g$ the scaling constants for $g$. 
Let $p=c_g/(c_++c_g)$ and we consider that 
\begin{align*}
 h &= p f+(1-p)c_g g \\
   &= p c_- f_- + c_+c_g/(c_++ c_g)(f_++g). 
\end{align*}
Then $h_+(x)=c_h(f_++g),\,c_h=(c_+ + c_g)^{-1}$ satisfies
\[
 h_+(x)= 2 c_h e^{-n}n^{-2},\quad x \in [a_n,a_{n,2}],   
\]
and it is piece-wise linear. In a similar manner to $h$ of Ex.1 or $g$, we could show that 
$h_+\in \cals$ (we omit the proof) and $h\in \cald$ is immediate. 
Since $h_+$ is the conditional density of $h$ on $\R_+$, by \cite[Lemma 4.13]{Foss:Korshunov:Zachary:2013} we see that $h \in \cals$. 
Now due to Lemma \ref{lem:conv:sum:R}, $f\ast g\in \cals$ is concluded. Here we ignore the constant $c_g$ of $g$ as done before. \hfill $\Box$\\

\noindent
{\bf Example 4}:  
$f \in \call \cap \cals^c $, $g(x)\in \cals$ with $f\ast g(x)\sim f(x)+g(x)$, but $h=f\ast g \in \call\cap \cals^c$. \\
Let $f \in \call \cap \cals^c$ be as in Ex.2 and let $g(x)=\gamma x^{\gamma-1} e^{-x^\gamma},\,\gamma\in(0,1)$, i.e. 
$g$ is a Weibull density with index $\gamma \in (0,1)$, so that $g \in \cals$ (see \cite[p.86]{Foss:Korshunov:Zachary:2013}) and 
$g(x)=o(f(x))$. Our goal is to see that $h=f\ast g \in \call\cap \cals^c$ via Lemma \ref{lem:comv:rplus} $(\mathrm{ii})$. 
This is a counterexample to \cite[Lemma 2]{Kluppelberg:Villasenor:1991}.

First we see that for sufficiently large $x$, $f$ is bounded from the bottom by 
$\underbar{f}(x)=c_f x^{-1}(\log x)^{-(3+\vep)}$ with $\vep\in (0,1)$ and $c_f$ the normalized constant for $f$, i.e. 
$f(x)\ge \underbar{f}(x)$.  
As before we ignore the constant $c_f$ w.l.o.g. Since $\underbar{f}$ is decreasing,  
for sufficiently large $n$, we observe that 
\begin{align*}
& \underbar{f}(a_{n,3}) < \underbar{f}(a_{n,1}) <\underbar{f}(a_{n}) \le 
f(a_{n})= f(a_{n,1})= f(a_{n,3}), \\
& \underbar{f}(a_{n,2}) < \underbar{f}(a_n)= f(a_n) < f(a_{n,2}). 
\end{align*}
Then, noticing that $\underbar{f}$ is convex and and $f$ is a piece-wise linear function, 
we get $f(x)\ge \underbar{f}(x)$. 

Next we see that we may take $\alpha(x)=(2\log x)^{1/\gamma}$ such that 
$f$ is $\alpha$-insensitive. This is $\alpha(x)$ of Lemma \ref{lem:comv:rplus} $(\mathrm{ii})$. 
Notice that for $x \in \rI_n=(a_n,a_{n+1}]$, $(2n)^{1/\gamma}\le \alpha(x)\le (2(n+1))^{1/\gamma}$. 
For sufficiently large $n$, we may consider $8$ patterns for positions of $(x-\alpha(x),x)$, i.e. 
\begin{align*}
& (x-\alpha(x),x) \in \rI_{n,i},\quad i=1,2,3,4, \\
& x-\alpha(x)\in \rI_{n-1,4},\,x\in \rI_{n,1}\quad \text{and}\quad x-\alpha(x)\in \rI_{n,i-1},\,x \in \rI_{n,i},\,i=2,3,4, 
\end{align*}
where these intervals are defined in Ex.2. 

For the pattern of the first line, it follows that 
\[
 f(x-\alpha(x)) = f(x)-f'(x) \alpha(x)
\]
and in view of $f'(x)$: \eqref{derivative:f:ex2} in Ex. 2, we have 
\[
 \sup_{x\in \rI_{n}}|f'(x)| \alpha(x) =o\big(
\inf_{x\in \rI_{n}}f(x)
\big).
\]
Thus, we see that $f(x-\alpha(x))\sim f(x)$. 
For the second case, we have  
\[
 f(x-\alpha(x)) = f(x)+f'(x)(d_n-x)+f'(x-\alpha(x))(x-\alpha(x)-d_n),\quad d_n=\{a_n,a_{n,1},\ldots,a_{n,4}\}
\]
(notice that $f$ is piece-wise liner) and thus 
\[
 f(x)-2\sup_{x\in \rI_{n-1,4}\cup\rI_{n}}|f'(x)|\alpha(x) \le f(x-\alpha(x)) \le f(x) + 2\sup_{x\in \rI_{n-1,4}\cup\rI_{n}} |f'(x)|\alpha(x) .
\]
Again in view of $f'(x)$: \eqref{derivative:f:ex2} in Ex. 2, particularly on $\rI_{n-1,4}$, we obtain 
\[
 \sup_{x\in \rI_{n-1,4}\cup \rI_n}|f'(x)| \alpha(x) =o\big(
\inf_{x\in \rI_{n}}f(x)
\big),
\]
so that $f(x-\alpha(x))\sim f(x)$. 

Now for $\alpha(x)=(2\log x)^{1/\gamma}$ we will see the condition $(\mathrm{ii})$ of 
Lemma \ref{lem:comv:rplus}. Since $g(x)$ is decreasing 
\begin{align*}
 \int_{\alpha(x)}^{x-\alpha(x)} f(x-y)g(y)dy 
&\le g(\alpha(x)) \int_{\alpha(x)}^{x-\alpha(x)}f(y)dy \\
&\le \gamma (2\log x)^{1-1/\gamma} x^{-2} \int_{\alpha(x)}^{x-\alpha(x)} f(y)dy \\
&= o\big(\underbar{f}(x)\big)=o\big(f(x)\big), 
\end{align*}
and the condition of Lemma \ref{lem:comv:rplus} $(\mathrm{ii})$ is satisfied. 
From Lemma \ref{lem:comv:rplus} $f\ast g(x)\sim f(x)\sim f(x)+g(x)$.
One can directly prove $f\ast g(x)\sim f(x)+g(x)$ also, since $f,g \in \call$. 
\hfill $\Box$\\ 

%

\noindent
{\bf Example 5}: Counterexample to Lemma 1 of \cite{Kluppelberg:Villasenor:1991}.
Let $f$ and $g$ be the same as those in Ex.4. Define $F$ and $G$ on $\R_+$ as
\[
F(dx)=e^{-\gamma x}f(x)dx/ \wh f(-\gamma) 
\quad \text{and}\quad G(dx)=e^{-\gamma x}g(x)dx/ \wh g(-\gamma). 
\]
Recall that in Ex.4, we have $f,g \in \call$, $f*g \notin \cals$ and $ f*g(x) \sim f(x)$ with $g(x) =o(f(x))$.
 By applying Theorem 2.1 of \cite{Watanabe:Yamamuro:2010},  we see that $F,G \in \call_\gamma$, $F*G \notin \cals_\gamma$, and
$$\overline{F*G}(x) \sim c_1e^{-\gamma x}f*g(x) \sim c_1 e^{-\gamma x}f(x)\sim  
\wh f(\gamma)
\overline{F}(x)$$
with $\overline{G}(x)=o(\overline{F}(x))$ and $ \int_{0}^{\infty}e^{\gamma x}F(dx) < \infty$.
Thus, Example 5 is a counterexample to Lemma 1 of \cite{Kluppelberg:Villasenor:1991}.\hfill $\Box$ \\

\noindent
{\bf Example 6}: $f\in \call \cap \cals^c \cap \cala $. \\
Let $f$ be a piece-wise linear function on $\R_+$. 
Define sequences $\{a_n,b_n\},\,n\in \N$ as 
\[
 a_n=e^{n^2},\, b_n=e^{n^2}+e^{3n},\, a_{n+1}=e^{(n+1)^2}
\]
and set intervals 
\[
 \rI_{n,1}=(a_n,b_n],\quad \rI_{n,2}=(b_n,a_{n+1}]\quad \text{and}\quad \rI_n=(a_n,a_{n+1}].
\]
Let $f(x)=e^{-1}$ on $[0,e]$, $f(x)=e^{-n^2}n^{-4}$ on $(b_{n-1},a_n]$ and on $(a_n,b_n]$ 
$f(x)$ connects $f(a_n)=e^{-n^2}n^{-4}$ and $f(b_n)=e^{-(n+1)^2}(n+1)^{-4}$ with a line segment.
The slope of $f(x)$ on $(a_n,b_n]$ is 
\begin{align*}
 f'(x) = \frac{f(b_n)-f(a_n)}{b_n-a_n} = 
e^{-(n+1)^2 - n}(n+1)^{-4}(e^{-2n} -e n^{-4}(n+1)^{4}) = o\big(\inf_{x\in \rI_n}f(x)\big).   
\end{align*}
Then, by a similar argument as before we have $f\in \call$.

Next we specify the value of the integral of 
$f$ on $\rI_n$. Since 
\[
 \int_{(a_n,b_n]}f(x) dx = (f(a_n)+f(b_n)) (b_n-a_n)/2 = 2^{-1} e^{-n^2+3n} n^{-4}(1+e^{-2n-1}n^4(n+1)^{-4}) 
\]
and 
\[
 \int_{(b_n,a_{n+1}]} f(x)dx = 
f(a_{n+1}) (a_{n+1}-b_n)=
(n+1)^{-4}\big(
1-e^{-2n-1}-e^{3n -(n+1)^2}
\big),
\]
we obtain
\begin{align}
\label{integral:I_n:third}
 \int_{\rI_n} f(x)dx &=(n+1)^{-4}\big(
1-e^{-2n-1}-e^{3n -(n+1)^2}\big) \\
 &\quad +2^{-1} e^{-n^2+3n} n^{-4}\big(1+e^{-2n-1}n^4(n+1)^{-4}\big) \nonumber
\end{align}
and therefore 
\[
 \int_{\R_+}f(x)dx = 1+ \sum_{n=1}^n \int_{\rI_n}f(x)dx <\infty.
\]
From here we ignore the scaling constant of $f$ and 
regard $f$ as a density.

Next we consider the following quantity 
\[
 \int_{\alpha(x)}^{x-\alpha(x)} \frac{f(x-y)}{f(x)}f(y)dy,
\] 
for the function $\alpha(x)$. 
We take $\alpha(x)=e^{2 \sqrt{\log x}}<x/2$ for sufficiently large $x>0$. 
Then for $x=b_n$ since $\alpha(b_n) = e^{2\sqrt{n^2+\log(1+e^{-n^2+3n})}}\le e^{4n}$, 
\begin{align*}
 \int_{\alpha(b_n)}^{b_n-\alpha(b_n)} \frac{f(b_n-y)}{f(b_n)}f(y)dy 
&\ge \int^{b_n-e^{4n}}_{e^{4n}} \frac{f(b_n-y)}{f(b_n)}f(y)dy \\
& \ge e^{2n+1}\frac{(n+1)^4}{n^4} \int_{e^{4n}}^{e^{4n+4\sqrt{n}+1}} f(y)dy, 
\end{align*}
where we notice that $b_{n-1} \le b_n -e^{4n}\le a_n$, so that $f(b_n-y)=e^{-n^2}n^{-4}$. 
Now take $n=k^2$ for sufficiently large $k \in \N$ and we have from \eqref{integral:I_n:third} that 
\begin{align*}
  \int_{\alpha(b_n)}^{b_n-\alpha(b_n)} \frac{f(b_n-y)}{f(b_n)}f(y)dy  
&\ge e^{2k^2+1}\frac{(k^2+1)^4}{k^8} \int_{e^{(2k)^2}}^{e^{(2k+1)^2}} f(y)dy \\
& \ge c e^{2k^2} (2k+1)^{-4} \to \infty,
\end{align*}
as $k\to\infty$, so that $b_n\to \infty$. 
Again by \cite[Theorem 4.7]{Foss:Korshunov:Zachary:2013}, $f\in \cals^c$. \\

\section{Proofs}
\label{proofs}
In the following proofs, if we mention the function $\alpha(x)$, we always assume that $\alpha(x)<x/2$ and $\alpha(x)\to\infty$ as $x\to\infty$. 
\begin{proof}[Proof of Lemma \ref{lem:conv:sum}]
Notice that $f\ast g\in \call$ (cf. \cite[Theorem 4.3]{Foss:Korshunov:Zachary:2013}), and we assume that all $f,g,f\ast g$ are $\alpha$-insensitive 
throughout the proof. \\ 
$(\mathrm{i})$ First we prove the last assertion. 
Since $f,g \in \call$, 
\begin{align}
\label{liminf:fastg:fg}
 \liminf_{x\to\infty}  f\ast g(x)/(f(x)+g(x))=1
\end{align}
(\cite[Theorem 4.2]{Foss:Korshunov:Zachary:2013}), and then observe that for sufficiently large $x>0$ 
\begin{align*}
& \int_{\alpha(x)}^{x-\alpha(x)} f\ast g (x-y) f\ast g (y)dy \\
&\ge (1-\vep) \int_{\alpha(x)}^{x-\alpha(x)} 
(f+g)(x-y)(f+g)(y) dy \quad \text{for some}\quad \vep \in(0,1).
\end{align*}
Since $f\ast g\in \cals$, this implies 
\begin{align}
\label{fg:smal:fastg}
 \int_{\alpha(x)}^{x-\alpha(x)} 
f(x-y) g(y) dy=o(f\ast g (x)).  
\end{align}
Now we apply \eqref{fg:smal:fastg} to the inequality 
\begin{align*}
 f\ast g(x) &\le \big(f(x)+g(x)\big) \max\Big(
\int_0^{\alpha(x)} \frac{f(x-y)}{f(x)}g(y)dy, \int_0^{\alpha(x)} \frac{g(x-y)}{g(x)}f(y)dy
\Big) \\
&\quad + 2 \int_{\alpha(x)}^{x-\alpha(x)} f(x-y) g (y)dy 
\end{align*}
and obtain  
\begin{align}
\label{limsup:fastg:f+g}
 \limsup_{x\to\infty} f\ast g(x)/(f(x)+g(x)) \le 1.
\end{align}
This together with \eqref{liminf:fastg:fg} implies the result.

Next we turn to the first assertion and show the $(\Leftarrow)$ part. 
Observe that  
\begin{align}
\label{def:pf(1-p)g}
& (pf+(1-p)g)^{\ast 2}(x) \\
&= p^2 f^{\ast 2}(x) + 2p(1-p) f\ast g(x) +(1-p)^2 g^{\ast 2}(x) \nonumber \\
&= 2 p^2 \int_0^{\alpha(x)} f(x-y) f(y)dy + 2(1-p)^2 \int_0^{\alpha(x)} g(x-y) g(y)dy \nonumber \\
&\quad + 2p(1-p)\int_0^{\alpha(x)} (f(x-y)g(y)+g(x-y)f(y))dy \nonumber \\
&\quad + \underbrace{\int_{\alpha(x)}^{x-\alpha(x)} \big\{p^2f(x-y)f(y)+(1-p)^2g(x-y)g(y)
+2p(1-p)f(x-y)g(y)
\big\}dy}_{:= I(x)} \nonumber \\
& \sim 2pf(x)+2(1-p)g(x)+I(x). \nonumber
\end{align}
Since $pf+(1-p)g\in \cals $,  
\[
 \lim_{x\to\infty} I(x)/(pf(x)+(1-p)g(x)) =0
\]
should hold, so that 
\begin{align}
\label{negligible:temrs}
 \int_{\alpha(x)}^{x-\alpha(x)} f(x-y)g(y)dy \le I(x) =o(pf(x)+(1-p)g(x))=o(f(x)+g(x)). 
\end{align}
From this we have 
\begin{align}
\label{approx:fastg:fplusg}
 f\ast g(x) &= \int_{0}^{\alpha(x)}\{ f(x-y)g(y)+g(x-y)f(y)\} dy + \int_{\alpha(x)}^{x-\alpha(x)} f(x-y)g(y)dy \\
&\sim f(x)+g(x). \nonumber 
\end{align}
Now put $p=2^{-1}$ in \eqref{def:pf(1-p)g} and apply \eqref{negligible:temrs}: $I(x)=o(f(x)+g(x))$, we obtain 
$2^{-1}(f+g) \in \cals$,
so that 
\eqref{approx:fastg:fplusg} together with \cite[Theorem 4.8]{Foss:Korshunov:Zachary:2013}
yield $f\ast g \in \cals$. \\

Finally we show the $(\Rightarrow)$ part. 
Since $f\ast g(x)\sim f(x)+g(x)$, we have in \eqref{def:pf(1-p)g} that 
\begin{align*}
(pf+(1-p)g)^{\ast 2}(x) 
&\sim 2p^2 \int_{0}^{\alpha(x)} f(x-y)f(y)dy + 2p(1-p)(f(x)+g(x)) \\
&\quad + 2 (1-p)^2 \int_{0}^{\alpha(x)} g(x-y)g(y)dy \\
&\quad + \int_{\alpha(x)}^{x-\alpha(x)} (p^2 f(x-y)f(y) +(1-p)^2 g(x-y)g(y))dy.
\end{align*}
Recalling the argument to obtain \eqref{fg:smal:fastg}, 
we bound the last integral by 
\[
 c \int_{\alpha(x)}^{x-\alpha(x)} (f(x-y)f(y)+g(x-y)g(y)) dy = o(pf(x)+(1-p)g(x)). 
\]
This yields 
\[
 (pf+(1-p)g)^{\ast 2}(x) \sim 2(pf(x)+(1-p)g(x)),
\]
and the proof is over. 

\noindent 
 $(\mathrm{ii})$ It suffices to show that $(\Leftarrow)$ part. 
In view of \eqref{approx:fastg:fplusg}, the condition implies 
\[
\int_{\alpha(x)}^{x-\alpha(x)} f(x-y)g(y)dy =o(f(x)+g(x)).  
\]
Then noticing $f,g\in \cals$, we have 
\begin{align*}
 \int_{\alpha(x)}^{x-\alpha(x)} f\ast g(x-y) f\ast g (y)dy
&\sim \int_{\alpha(x)}^{x-\alpha(x)} (f+ g)(x-y) (f+ g) (y)dy\\
&= \int_{\alpha(x)}^{x-\alpha(x)} f(x-y)f(y)+g(x-y)g(y)dy \\
&\quad + 2 \int_{\alpha(x)}^{x-\alpha(x)} f(x-y)g(y)dy \\
&= o(f(x)+g(x))=o(f\ast g(x)).
\end{align*}
Therefore $f\ast g\in\cals$ follows from \cite[Theorem 4.7]{Foss:Korshunov:Zachary:2013}. 
\end{proof}


\begin{proof}[Proof of Lemma \ref{lem:comv:rplus}]
We use notation $h=f\ast g$. \\
$(\mathrm{i})$ $(\Leftarrow)$ Take $\alpha$ as in the condition $(\mathrm{ii})$. 
Since $f\in \cals$ and $g(x)=o\big(f(x)\big)$,  
\begin{align*}
 h(x) 
      &= \int_0^{\alpha(x)} f(x-y)g(y)dy + \int_0^{\alpha(x)} \frac{g(x-y)}{f(x-y)}f(x-y)f(y)dy \\
&\quad  + f(x)\int_{\alpha(x)}^{x-\alpha(x)}
\frac{g(y)}{f(y)} \frac{f(x-y) f(y)}{f(x)}dy \sim f(x),
\end{align*}
so that \cite[Theorem 4.8]{Foss:Korshunov:Zachary:2013} yields $h\in \cals$.  \\
$(\Rightarrow)$ Since $f\in \call$ 
\begin{align}
\label{ineq:limsup:f/h}
 \limsup_{x\to\infty}f(x)/h(x)\le 1
\end{align}
and we will consider 
\begin{align}
\label{ineq:liminf:f/h}
 \liminf_{x\to\infty} f(x)/h(x) \ge 1. 
\end{align}
Take $\alpha$ as before but we also require that $h$ 
is $\alpha$-insensitive. Observe that 
\begin{align*}
 1 &= \frac{f(x)}{h(x)} \int_{0}^{\alpha(x)} \frac{f(x-y)}{f(x)}g(y)dy + 
\frac{f(x)}{h(x)} 
\int_{0}^{\alpha(x)} \frac{g(x-y)}{f(x-y)}\frac{f(x-y)}{f(x)}f(y)dy (:=J_1(x)) \\
&\quad + \int_{\alpha(x)}^{x-\alpha(x)} \frac{g(y)}{f(y)} \frac{f(x-y)}{h(x-y)} \frac{f(y)}{h(y)} \Big( \frac{h(x-y)h(y)}{h(x)} \Big) dy 
(:=J_2(x)). 
\end{align*} 
By \eqref{ineq:limsup:f/h}, $g(x)=o(f(x))$ and $f\in\call$, $\limsup_{x\to\infty} J_1(x)=0$, while 
by \eqref{ineq:limsup:f/h}, $g(x)=o(f(x))$ and $h\in \cals$, $\limsup_{x\to\infty} J_2(x)=0$. 
Thus we have \eqref{ineq:liminf:f/h}, so that $h(x)\sim f(x)$. Again by \cite[Theorem 4.8]{Foss:Korshunov:Zachary:2013} $f\in \cals$. \\
$(\mathrm{ii})$ 
In view of the contraposition of $(\mathrm{i})$, $f\notin \cals \Leftrightarrow h\notin \cals$ follows. 
We only see $h(x)\sim f(x)$, which implies $h \in \call$. For this 
we prove \eqref{ineq:liminf:f/h}. Write 
\[
 h(x)=\int_0^{\alpha(x)} f(x-y)g(y)dy + \int_0^{\alpha(x)} \frac{g(x-y)}{f(x-y)}f(x-y)g(y)dy +
 \int_{\alpha(x)}^{x-\alpha(x)} f(x-y)g(y)dy.  
\]
Due to condition of $(\mathrm{ii})$ and $g(x)=o(f(x))$, 
we have by Fatou's lemma 
\[
 \limsup_{x\to\infty} \frac{h(x)}{f(x)} \le \limsup_{x\to\infty} \int_0^{\alpha(x)} \frac{f(x-y)}{f(x)}g(y)dy =1,
\]
so that $h(x)\sim f(x)$. 
\end{proof}

\begin{proof}[Proof of Lemma \ref{lem:ani:R}]
Suppose that $f^+ \in \cals\cap \cald^c$. Then there are increasing divergent positive sequences 
$(a_n)$,$(b_n)$ and $(c_n),\,n\ge 1$ such that
$f^+(a_n+u)/f^+(a_n)\ge c_n$ for $b_n < u <b_n+1$. 
We can choose a subsequence if necessary such that
$c: =\sum_{n=1}^{\infty} 1/\sqrt{c_n} <\infty$ and $b_{n+1} >b_n +1$ for $n \ge 1$.
Define $f^-$ as
$f^-(-u)=1/c \sqrt{ c_n}$ for $b_n <  u <b_n+1 $, and 
otherwise $f^-(-u)=0$.
Then, we have
$$\liminf_{n \to \infty}\frac{ f*f(a_n)}{f(a_n)}\ge
2(1-p)
\liminf_{n \to \infty}\int_{-b_n-1}^{-b_n}\frac{ f^+(a_n-u)f^-(u)}{f^+(a_n)}du=\infty,$$
so that we see that $f \notin \cals$.
\end{proof}

\begin{proof}[Proof of Lemma \ref{lem:conv:sum:R}]
For the function $\alpha(x)$, we assume w.l.o.g. that 
all $f,g,f\ast g$ are $\alpha$-insensitive. 
Then, similarly as in \eqref{def:pf(1-p)g} we observe from $\alpha$-insensitivity that 
\[
 (pf+(1-p)g)^{\ast 2}(x)\sim 2pf(x)+2(1-p)g(x)+ J(x),
\]
where with $I(x)$ of \eqref{def:pf(1-p)g} 
\begin{align*}
J(x) 
&= I(x) + \int_{-\infty}^{-\alpha(x)} \{ 2p^2 f(x-y)f(y) + 2(1-p)^2 g(x-y)g(y) \}dy \\
&\quad + 2p(1-p) \int_{-\infty}^{-\alpha(x)} (f(x-y)g(y)+g(x-y)f(y)) dy. 
\end{align*}
Since $pf+(1-p)g\in \cals$, 
\begin{align}
\label{neglidgeble:j}
 \lim_{x\to\infty} J(x)/(p f(x)+(1-p)g(x))=0 
\end{align}
should hold, so that 
\begin{align*}
& \int_{-\infty}^{-\alpha(x)} (f(x-y)g(y)+g(x-y)f(y)) dy + \int_{\alpha(x)}^{x-\alpha(x)} f(x-y)g(y)dy \\
& =o(pf(x)+(1-p)g(x))=o(f(x)+g(x)).
\end{align*}
Using this, we obtain $f\ast g(x)\sim f(x)+g(x)$.

Next, we show $f\ast g\in \cals$. Observe that 
\begin{align}
\label{rep:fastg}
 (f\ast g)^{\ast 2}(x) = \Big(
2\int_{-\alpha(x)}^{\alpha(x)} + 2\int_{-\infty}^{-\alpha(x)} + \int_{\alpha(x)}^{x-\alpha(x)}
\Big) f\ast g(x-y) f\ast g(y) dy 
\end{align}
and that by \eqref{neglidgeble:j} 
\begin{align}
\begin{split}
\label{rep:fastg:condi1}
 \int_{\alpha(x)}^{x-\alpha(x)} f\ast g(x-y) f\ast g(y)dy 
&\sim \int_{\alpha(x)}^{x-\alpha(x)} (f+g)(x-y)(f+g)(y)dy \\
&=o(f(x)+g(x))=o(f\ast g(x)).
\end{split}  
\end{align}
Under the first condition, $f+g\in \cald$ holds, so that 
\begin{align}
\begin{split}
\label{rep:fastg:condi2:1}
 \int_{-\infty}^{-\alpha(x)} f\ast g(x-y) f\ast g(y) dy & \sim 
 \int_{-\infty}^{-\alpha(x)} (f+g)(x-y) f\ast g(y) dy \\
 &\le c (f(x)+g(x)) \int_{-\infty}^{-\alpha(x)} f\ast g(y)dy =o(f(x)+g(x)).  
\end{split}
\end{align}
Under the second condition, it follows from \eqref{neglidgeble:j} that 
\begin{align}
\begin{split}
\label{rep:fastg:condi2:1}
 \int_{-\infty}^{-\alpha(x)} f\ast g(x-y)f\ast g(y) dy &\le c \int_{-\infty}^{-\alpha(x)} (f+g)(x-y)(f+g)(y)dy \\
& =o(f(x)+g(x)). 
\end{split}
\end{align}
Therefore, $f\ast g\in \cals$ is concluded from \eqref{rep:fastg}, \eqref{rep:fastg:condi1} and 
[\eqref{rep:fastg:condi2:1} or \eqref{rep:fastg:condi2:1}]. 

We proceed to the converse part. Similarly as in the proof of Lemma \ref{lem:conv:sum} $(\mathrm{i})$ we could derive 
\eqref{liminf:fastg:fg} and \eqref{fg:smal:fastg}. 
First we assume $f\ast g \in \cald$ and then 
\begin{align*}
 f\ast g(x) &\le (f(x)+g(x)) \max\Big(
\int_{-\alpha(x)}^{\alpha(x)} \frac{f(x-y)}{f(x)} g(y)dy, \int_{-\alpha(x)}^{\alpha(x)} \frac{g(x-y)}{g(x)} f(y)dy
\Big) \\
&\quad + \int_{\alpha(x)}^{x-\alpha(x)}f(x-y) g(y)dy  + 2 \int_{-\infty}^{-\alpha(x)} (f(x-y)g(y)+g(x-y)f(y)) dy,
\end{align*}
where the second term is $o(f\ast g(x))$ by \eqref{fg:smal:fastg}, and the last integral is bounded by 
\begin{align*}
& 2 \int_{-\infty}^{-\alpha(x)} (f+g)(x-y)(f+g)(y)dy \\
&\quad \le c (f\ast g(x)) \int_{-\infty}^{-\alpha(x)}(f(y)+g(y))dy =o(f\ast g(x)), 
\end{align*}
where \eqref{liminf:fastg:fg} and $f\ast g \in \cald$ are used. 
Then, by exactly the same logic as in the proof of of Lemma \ref{lem:conv:sum} $(\mathrm{i})$ 
we obtain \eqref{limsup:fastg:f+g}.

Next, we prove the assertion under the second condition. Due to \eqref{liminf:fastg:fg} and 
the condition, we derive 
\begin{align*}
\int_{-\infty}^{-\alpha(x)} f\ast g(x-y) f\ast g(y)dy  \ge C(1-\vep) \int_{-\infty}^{-\alpha(x)} (f+g)(x-y)(f+g)(y)dy, 
\end{align*}
so that 
\[
 \int_{-\infty}^{-\alpha(x)} (f+g)(x-y)(f+g)(y)dy =o(f\ast g(x)). 
\]
The remaining proof is similar to the first case and we omit it. Thus, $f\ast g(x)\sim f(x)+g(x)$.

Now since 
\[
\Big( \int_{\alpha(x)}^{x-\alpha(x)} + \int_{-\infty}^{-\alpha(x)} \Big)  (f+g)(x-y)(f+g)(y)dy = o(f(x)+g(x)),
\]
we obtain $f+g\in \cals$ from the integral representation of $(f+g)^{\ast 2}$.
\end{proof}

\begin{proof}[Proof of Lemma \ref{lem:comv:two-sided}]
$(\mathrm{i})$ $(\Rightarrow)$ immediately follows from \cite[Proposition 3.12 ({\rm i})]{matsui:2022}. \\
$(\Leftarrow)$ 
Let $h=f\ast g$. 
Since $f\in \call$ implies \eqref{ineq:limsup:f/h} we will prove \eqref{ineq:liminf:f/h}. 
For the function $\alpha(x)$, we assume w.l.o.g. that $h$ and $f$ are $\alpha$-insensitive and write
\begin{align*}
 1&=\Big(
\int_{-\infty}^{-\alpha(x)} + \int_{-\alpha(x)}^{\alpha(x)} +\int_{\alpha(x)}^{x-\alpha(x)} +
\int_{x-\alpha(x)}^{x+\alpha(x)} +\int_{x+\alpha(x)}^\infty
\Big) \frac{f(x-y) g(y)}{h(x)} dy \\
&=: I_1(x)+\cdots+I_5(x). 
\end{align*}
Since $f\in \call$, 
\[
 \liminf_{x\to\infty} I_2(x) = \liminf_{x\to\infty} \frac{f(x)}{h(x)} \int_{-\alpha(x)}^{\alpha(x)} \frac{f(x-y)}{f(x)}g(y)dy = 
\liminf_{x\to\infty} \frac{f(x)}{h(x)}.
\]
For $I_1$ we use the al.d. property of $f$ and \eqref{ineq:limsup:f/h} to obtain
\[
 I_1(x) = \frac{f(x)}{h(x)} \int_{-\infty}^{-\alpha(x)} \frac{f(x-y)}{f(x)}g(y)dy \le c\int_{-\infty}^{-\alpha(x)}g(y)dy \to 0
\]
as $x\to\infty$. 
We observe from \eqref{ineq:limsup:f/h} and $g(x)=o(f(x))$ that 
\begin{align*}
 I_3(x)=\int_{\alpha(x)}^{x-\alpha(x)} \frac{f(x-y)}{h(x-y)} \frac{f(y)}{h(y)} \frac{g(y)}{f(y)} \frac{h(x-y)h(y)}{h(x)}dy 
\le c \int_{\alpha(x)}^{x-\alpha(x)} \frac{h(x-y)h(y)}{h(x)}dy \to 0
\end{align*}
as $x\to\infty$, and moreover
\begin{align*}
 I_4(x)=\frac{f(x)}{h(x)}\int_{-\alpha(x)}^{\alpha(x)} \frac{g(x-y)}{f(x-y)} \frac{f(x-y)}{f(x)} f(y)dy \le o(1) \int^{\alpha(x)}_{-\alpha(x)} f(y)dy \to 0
\end{align*}
as $x\to\infty$.
Finally, since $f\in \cald$,  
\begin{align*}
 I_5(x) = \frac{f(x)}{h(x)} \int_{-\infty}^{-\alpha(x)} 
\frac{f(x-y)}{f(x)} \frac{g(x-y)}{f(x-y)} f(y)dy 
 \le c \int_{-\infty}^{-\alpha(x)} f(y)dy \to 0 
\end{align*}
as $x \to \infty$.
Thus in view of above results \eqref{ineq:liminf:f/h}, so that $h(x)\sim f(x)$ follows. 
Now by Lemma 3.11 of \cite{matsui:2022} together with $f\in \cald$, we obtain $f\in \cals$. \\

\noindent 
$(\mathrm{ii})$ Let $h=f\ast g$. By taking the contraposition of $(\mathrm{i})$, $f\in \cals^c \Leftrightarrow h \in \cals^c$ follows. 
We only observe that $h(x)\sim f(x)$ which implies $h\in \call$. For this we prove \eqref{ineq:liminf:f/h}. 
Write 
\begin{align*}
 h(x)/f(x) &= \Big(\int_{-\alpha(x)}^{\alpha(x)} + \int_{\alpha(x)}^{x-\alpha(x)}
+\int_{-\infty}^{-\alpha(x)}
\Big)
\big(
f(x-y)g(y) +g(x-y)f(y)\big)/ f(x) dy. 
\end{align*}
By condition 
the second integral is negligible, and the last integral is bounded as 
\begin{align*}
\int_{-\infty}^{-\alpha(x)} \frac{f(x-y)}{f(x)}
\Big(
g(y)+\frac{g(x-y)}{f(x-y)}f(y)
\Big)dy \le c \int_{-\infty}^{-\alpha(x)} (g(y)+f(y))dy \to 0
\end{align*}
as $x\to\infty$. Now applying the condition $g(x)=o(f(x))$ to the first integral, we have 
\[
 \limsup_{x\to\infty} \frac{h(x)}{f(x)} \le \limsup_{x\to\infty} \int_{-\alpha(x)}^{\alpha(x)} 
\Big(
g(y)+\frac{g(x-y)}{f(x-y)}f(y)
\Big)dy =1,
\]
so that $h(x)\sim f(x)$.
\end{proof}

\begin{proof}[Proof of Lemma \ref{lem:exp:tilt}]
The proof for the equivalence of long-tailedness is immediate, 
and we only prove the equivalence of subexponentiality. \\
$(\Rightarrow)$ In view of $\wh f_\gamma(\gamma) = \wh f^{-1}(-\gamma)$, it suffices to observe 
\[
 f^{\ast 2}(x)/f(x) = f^{\ast 2}_\gamma(x)/f_\gamma(x) \cdot \wh f(-\gamma) \sim 2 \wh f_\gamma(\gamma) \wh f (-\gamma)=2. 
\]
$(\Leftarrow)$ Similarly as above, we observe 
\[
 f^{\ast 2}_\gamma(x)/f_\gamma(x) = f^{\ast 2}(x)/f(x)/ \wh f(-\gamma) \sim 2 \wh f^{-1}(-\gamma) =2 \wh f_\gamma(\gamma). 
\]
\end{proof}


\end{document}